\title{On orthogonal factorization systems and double categories}
\author{Branko Juran \thanks{Department of Mathematical Sciences, University of Copenhagen, Denmark \\ bj@math.ku.dk}}
\date{\vspace{-5ex}}
\newtheorem{lemma}{Lemma}[section]
\newtheorem{theorem}[lemma]{Theorem}
\newtheorem{thmx}{Theorem}
\newtheorem{proposition}[lemma]{Proposition}
\newtheorem{corollary}[lemma]{Corollary}
\theoremstyle{definition}
\newtheorem{example}[lemma]{Example}
\newtheorem{definition}[lemma]{Definition}
\newtheorem{construction}[lemma]{Construction}
\newtheorem{remark}[lemma]{Remark}
\newtheorem{notation}[lemma]{Notation}
\DeclareMathOperator{\const}{const}
\DeclareMathOperator{\op}{op}
\DeclareMathOperator{\precorners}{cnr}
\DeclareMathOperator{\corners}{Cnr}
\DeclareMathOperator{\map}{map}
\newcommand{\Sq}{\operatorname{Sq}^{\mathrm{oplax}}}
\DeclareMathOperator{\Cat}{Cat}
\DeclareMathOperator{\id}{id}
\DeclareMathOperator{\OFS}{OFS} 
\DeclareMathOperator{\Fun}{Fun}
\DeclareMathOperator{\PSh}{PSh}
\DeclareMathOperator*{\colim}{colim}
\DeclareMathOperator{\Aut}{Aut}
\DeclareMathOperator{\Ortho}{Ortho}
\DeclareMathOperator{\opGray}{opGray}
\DeclareMathOperator{\Span}{Span}
\newcommand{\cartright}{(cart,right)-fibration}
\newcommand{\cocartright}{(cocart,right)-fibration}
\newcommand{\leftcart}{(left,cart)-fibration}
\newcommand{\sOFSS}{\catS^{\dag}}
\newcommand{\sOFS}{{\cat}^{\dag}}
\newcommand{\join}[2]{#1 \star #2}
\newcommand{\overslice}[2]{{#1}_{/{#2}}}
\newcommand{\underslice}[2]{{#1}_{#2 /}}
\newcommand{\An}{\mathcal S}
\newcommand{\ArDC}[1]{\mathbb{A}\mathrm{r}\left(#1\right)}
\newcommand{\Ar}[1]{\operatorname{Ar}\left( #1 \right)}
\newcommand{\LinOrd}[1]{\left[ #1 \right]}
\DeclareMathOperator{\DCLR}{Fact}
\newcommand{\DblCatOF}{\DblCat^{\operatorname{OF}}}
\newcommand{\catL}{\mathcal C_{\mathrm{eg}}}
\newcommand{\catR}{\mathcal C_{\mathrm{in}}}
\newcommand{\catSL}{\mathcal D_{\mathrm{eg}}}
\newcommand{\catSR}{\mathcal D_{\mathrm{in}}}
\newcommand{\cat}{\mathcal C}
\newcommand{\catS}{\mathcal D}
\newcommand{\CatInftyOplax}{\CatInfty^{(2)}}
\newcommand{\doublecat}{\mathbb C}
\newcommand{\doublecatS}{\mathbb D}
\newcommand{\boxproduct}{\boxtimes}
\newcommand{\Catn}[1]{\Cat_{#1}}
\newcommand{\CatInfty}{\Catn{1}}
\newcommand{\core}[1]{#1^{\simeq}}
\newcommand{\CartRight}[1]{\mathrm{CaR}\left( #1 \right)}
\newcommand{\CocartRight}[1]{\mathrm{CoR}\left( #1 \right)}
\newcommand{\LeftCart}[1]{\mathrm{LCa}\left( #1 \right)}
\newcommand{\spine}[1]{I_{#1}}
\newcommand{\genSpine}[2][1]{I_{#1}\left( #2 \right)}
\newcommand{\OFSAd}{\OFS^{\bot}}
\newcommand{\DblCat}{\operatorname{DCat}}
\newcommand{\Gray}{\otimes}
\newcommand{\horop}[1]{{#1}^{1 \op}}
\newcommand{\verop}[1]{{#1}^{2 \op}}
\newcommand{\fullop}[1]{{#1}^{12 \op,\mathrm{swap}}}
\newcommand{\swapop}[1]{{#1}^{\mathrm{swap}}}
\newcommand{\Segalmap}{\rho}
\newcommand{\DblCatAd}{\DblCat^{\bot}}
\def\prodOFS{\bar{\times}}
\begin{document}
\maketitle

\begin{abstract}
    We prove that the $\infty$-category of orthogonal factorization systems embeds fully faithfully into the $\infty$-category of double $\infty$-categories. Moreover, we prove an (un)straightening equivalence for double $\infty$-categories, which restricts to an (un)straightening equivalence for op-Gray fibrations and curved orthofibrations of orthogonal factorization systems. 
\end{abstract}

\tableofcontents

\section{Introduction}
Orthogonal factorization systems and double categories are very classical objects in category theory, their study goes back to work of MacLane \cite{sm50} and Ehresmann \cite{ehr63}, respectively. Their $\infty$-categorical analogs, introduced by Joyal \cite{joy08} and by Haugseng \cite{hau13}, play an equally important role in higher category theory. 
Both concepts deal with categorical structures equipped with two distingushed classes of morphisms.
An orthogonal factorization system is a category together with the choice of two classes of morphisms, so that any morphism can uniquely be factored as a composite of one morphism from the first class, followed by one from the second.
Similarly, we can think of a double category as a category with two different types of morphisms, \emph{vertical} and \emph{horizontal} morphisms. These two different types of morphisms cannot be composed with each other but part of the data are so-called \emph{squares} which witness compatibilities between them.
From this point of view, it seems like double categories are a generalization of orthogonal factorization systems. 

The goal of this paper is to make this precise in the context of $\infty$-categories.
We will construct a functor \[ \DCLR \colon \OFS \longhookrightarrow \DblCat \] from the $\infty$-category of orthogonal factorization systems into the $\infty$-category of double $\infty$-categories (\Cref{def-DCLR}).
It sends an orthogonal factorization system to the double $\infty$-category which has the objects of the underlying $\infty$-category as its objects, horizontal morphisms are morphisms belonging to the first class, vertical morphisms are morphisms from the second class and squares are commutative squares. The essential image of this functor consists of the double $\infty$-categories where there is a unique square filling every choice of a \enquote{bottom left corner}: \[ \begin{tikzcd}[sep=huge]
    \bullet \ar[r,dashed] \ar[d] & \bullet \ar[d,dashed] \\
    \bullet \ar[r] & \bullet
\end{tikzcd}  \] 
This precisely encodes that the \enquote{wrong order} composition of morphisms from each class can uniquely be rewritten as composition in the \enquote{correct order}, arguably the most important feature of an orthogonal factorization system. We will then prove the following: 
\begin{thmx}[{\Cref{main-theorem}}] \label{intro-thm}
    The functor
    \[ \DCLR \colon \OFS \longhookrightarrow \DblCat \] from the $\infty$-category of orthogonal factorization systems into the $\infty$-category of double $\infty$-categories is fully faithful.
    The essential image consists of the double $\infty$-categories fulfilling the equivalent conditions from \Cref{equ-def-DblOF}.
\end{thmx}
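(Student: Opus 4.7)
My plan is to construct a candidate inverse functor on the full subcategory $\DblCatOF \subseteq \DblCat$ cut out by the unique corner filling condition, and to verify that this together with $\DCLR$ assembles into an equivalence $\OFS \simeq \DblCatOF$. Both fully faithfulness of $\DCLR$ and the claimed essential image follow from such an equivalence.

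First, I would check that $\DCLR(\cat, \catL, \catR)$ lies in $\DblCatOF$: a corner consists of a vertical (right-class) morphism $r \colon A \to B$ followed by a horizontal (left-class) morphism $\ell \colon B \to C$, and the OFS axioms guarantee that the space of $(\catL, \catR)$-factorizations of the composite $\ell \circ r$ is contractible, which is exactly the unique corner filling condition. Conversely, given $\doublecat \in \DblCatOF$, I would construct an OFS $R(\doublecat)$ whose objects are those of $\doublecat$, and whose mapping space from $A$ to $B$ is a colimit over intermediate objects $X$ of pairs consisting of a horizontal morphism $A \to X$ and a vertical morphism $X \to B$, with composition defined by invoking unique corner filling to swap a wrong-order pair back into the correct order. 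The left class is the image of the horizontal morphisms and the right class is the image of the vertical morphisms, and orthogonality follows directly from uniqueness of corner filling. I would then verify that $\DCLR \circ R \simeq \id$ and $R \circ \DCLR \simeq \id$: the first uses that in a double $\infty$-category with unique corner filling every commutative square in the underlying $\infty$-category admits a unique lift to a square in $\doublecat$, and the second uses that every morphism in $\cat$ has an essentially unique $(\catL, \catR)$-factorization.

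The main obstacle I anticipate is making $R$ well-defined coherently at the $\infty$-categorical level: a pointwise description of composition via corner filling only yields composition as a map of spaces, not the full simplicial structure, so coherences such as associativity and unit laws have to be extracted from the mere existence of unique fillings. The cleanest route, which I would pursue, is to give $\DCLR$ and $R$ universal descriptions via restriction along a functor of indexing diagrams --- realizing $\OFS$ as a certain full subcategory of Segal-type presheaves on one shape and $\DblCat$ as bi-Segal presheaves on another, with $\DCLR$ being the restriction. Unique corner filling would then appear as the saturation condition that makes this restriction an equivalence, sidestepping explicit coherence arguments entirely.
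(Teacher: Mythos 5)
Your outline points in the same direction as the paper's argument: the fact that $\DCLR$ lands in $\DblCatOF$ is exactly the factorization-space argument you sketch (\Cref{lem-defDCLR}, via \Cref{detectingOFS}), and the inverse is indeed built by a corepresented ``restriction along indexing objects'' construction rather than by hand. But the decisive step of your plan is asserted rather than supplied. The paper's inverse is $\precorners(\doublecat)_n = \map_{\DblCat}(\ArDC{\LinOrd{n}},\doublecat)$, corepresented by the double arrow categories $\ArDC{\LinOrd{n}} = \DCLR(\Ar{\LinOrd{n}})$, and the whole coherence problem you correctly flag is concentrated in one statement: the single corner-filling map \eqref{OFSwithRLP} and the spine inclusions $\genSpine[n]{\ArDC{-}} \to \ArDC{\LinOrd{n}}$ generate the same saturated class (\Cref{saturation-lemma}), so that being local with respect to corner filling is equivalent to the Segal condition for the corner construction. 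Your sentence ``unique corner filling would then appear as the saturation condition that makes this restriction an equivalence'' is precisely this lemma; it is not formal, and in the paper it takes an explicit combinatorial filtration of $\ArDC{\LinOrd{n}}$ by sub-bisimplicial spaces obtained by adjoining one corner square at a time via pushouts in $\PSh(\Delta \times \Delta)$. The variant you propose instead --- realizing $\OFS$ itself as a localization of Segal-type presheaves on some shape so that $\DCLR$ becomes a restriction --- is essentially equivalent in content to the theorem being proved (the relevant shape is $\Delta$ via $\LinOrd{n} \mapsto \Ar{\LinOrd{n}}$), so invoking it without an independent proof is circular. Until you name the corepresenting objects and prove the saturation statement, your plan contains no mechanism that actually produces the simplicial coherences.

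Even granting that, two further steps are missing. First, the resulting Segal space $\precorners(\doublecat)$ must be shown complete, and $\precorners$ must be refined to a genuine factorization system using the source and target maps together with \Cref{detectingOFS} (this is \Cref{precornesOFS} and \Cref{constr-corners}). Second, having functors in both directions does not yet give an equivalence: one must construct unit and counit transformations and prove they are invertible (\Cref{unitisiso}, \Cref{counitisiso}); the counit check rests on the nontrivial identification $\map_{\OFS}\left(\Ar{\LinOrd{n}},\sOFS\right) \cong \map_{\CatInfty}\left(\LinOrd{n},\cat\right)$ used in \Cref{construction-unit}, which your proposal nowhere provides.
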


A similar result in the context of ordinary $1$-categories has recently been obtained by \v{S}t\v{e}pán \cite[Thm. 3.30]{ms23}.

We use \Cref{intro-thm} to deduce several other results about orthogonal factorization systems.
Firstly, we verify that for $\sOFS$ an orthogonal factorization system, the functor $\DCLR$ from \Cref{intro-thm} induces an equivalence between  curved orthofibrations (or op-Gray fibrations, respectively) over $\sOFS$, defined in \cite{hhln23a}, and \cocartright s (or \cartright s, respectively) over $\DCLR(\sOFS)$, defined in \cite{nui21}, (\Cref{comparefibrations}): 
\begin{align} \label{intro-Ortho=Cor} \Ortho\left(\sOFS\right)  \cong \CocartRight{\DCLR\left(\sOFS\right)} \,, \end{align}
showing that we can regard the former as a special case of the latter.
Then we prove the following (un)straightening equivalence for these fibrations:
\begin{thmx}[{\Cref{maintheorem}}] \label{intro-str-unst}
    For $\doublecat$ a double $\infty$-category, let $\core{\CocartRight{\doublecat}}$ denote the space of \cocartright s. 
    There is a natural equivalence \[ \core{\CocartRight{\doublecat}} \cong \map_{\DblCat}\left(\verop{(\doublecat)},\Sq\left(\CatInftyOplax\right)\right)  \] of functors from double $\infty$-categories to spaces. Here, $\Sq\left(\CatInftyOplax\right)$ is the large double $\infty$-category having $\infty$-categories as its objects, functors of $\infty$-categories as horizontal and vertical morphisms and natural transformations as squares.
\end{thmx}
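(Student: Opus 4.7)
The strategy is the standard (un)straightening pattern: construct a natural transformation of presheaves on $\DblCat$, show both sides preserve limits, and check on a generating class. First, given a \cocartright{} $p\colon \mathbb E \to \doublecat$, I would construct a double functor $\mathrm{St}(p)\colon \verop{(\doublecat)} \to \Sq(\CatInftyOplax)$ that sends an object $c$ of $\doublecat$ to the fiber $\mathbb E_c$, a horizontal morphism to its cocartesian transport, and a vertical morphism (reversed in $\verop{(\doublecat)}$) to the transport functor induced by its right lift. A square of $\doublecat$ is then sent to the natural transformation obtained by comparing the cocartesian-then-right lift with the right-then-cocartesian lift of its boundary and factoring through the universal property of the cocartesian lift. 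Naturality of $\mathrm{St}$ in $\doublecat$ is immediate from the stability of \cocartright s and of both types of lift under pullback along double functors.

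Next, I would verify that both sides of the proposed equivalence send colimits in $\DblCat$ to limits of spaces. The right-hand side does so by construction, and for the left-hand side this reduces to descent for \cocartright s, which follows from the fact that the cocartesian-horizontally and right-vertically conditions are both local and pullback-stable. It then suffices to check that $\mathrm{St}$ is an equivalence on a class of generators of $\DblCat$ under colimits, for which the boxproducts $[m]\boxproduct[n]$ of simplices are a natural choice.

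On such a generator $\doublecat = [m]\boxproduct[n]$, a double functor $\verop{([m]\boxproduct[n])} \to \Sq(\CatInftyOplax)$ unwinds to a coherent $(m{+}1)\times(n{+}1)$ grid of $\infty$-categories, equipped with horizontal functors, vertical functors (in the reversed direction), and oplax $2$-cells filling every unit square. On the other hand, a \cocartright{} over $[m]\boxproduct[n]$ can be analyzed via Lurie's cocartesian straightening along each row, together with the right-fibration straightening along each column, producing exactly the same kind of grid data, with the $2$-cells arising from the universal property of cocartesian lifts applied to non-horizontal morphisms.

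The main obstacle will be verifying that the $2$-cell data extracted from the fibration assembles coherently into a genuine double functor, i.e., that all higher pasting compatibilities are satisfied. This is an $\infty$-categorical Beck--Chevalley coherence problem in which the combined (cocart,right) hypothesis (rather than, say, (cocart,cart)) is essential: it is precisely what fixes the orientation of the $2$-cells to be oplax, thereby matching the target $\Sq(\CatInftyOplax)$ rather than one of its lax or invertible variants.
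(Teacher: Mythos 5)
There is a genuine gap, and it sits exactly where you flag it yourself: your proof constructs nothing that resolves the coherence problem, it only names it. Your step 1 (a hands-on straightening functor $\mathrm{St}$ sending objects to fibers, horizontal morphisms to cocartesian transport, vertical morphisms to contravariant transport and squares to comparison $2$-cells) and your step 3 (assembling row-wise cocartesian straightenings and column-wise right-fibration straightenings over $\LinOrd{m} \boxproduct \LinOrd{n}$ into a coherent grid) both terminate in the admission that ``the main obstacle will be verifying that the $2$-cell data \dots assembles coherently into a genuine double functor.'' That obstacle \emph{is} the theorem; without an argument for it the proposal is a reduction of the statement to an unproved coherence statement of essentially the same difficulty. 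The paper never constructs a straightening functor on a general $\doublecat$ at all. It only compares the two space-valued functors, and on the generators it avoids the coherence problem by quoting existing straightening results: by adjunction $\map_{\DblCat}\left(\verop{(\LinOrd{m} \boxproduct \LinOrd{n})},\Sq\left(\CatInftyOplax\right)\right) \cong \map_{\Catn{2}}\left(\LinOrd{n}^{\op} \Gray \LinOrd{m},\CatInftyOplax\right)$, the latter is identified with $\Ortho(\LinOrd{m} \prodOFS \LinOrd{n})$ by the cited results of Haugseng--Hebestreit--Linskens--Nuiten, and this is matched with $\core{\CocartRight{\LinOrd{m}\boxproduct\LinOrd{n}}}$ via \Cref{comparefibrations} and \Cref{product-OFS}. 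If you intend a self-contained proof you must actually solve the grid-coherence problem (which is the content of those cited theorems, and of Nuiten's categorified statement); if you intend to cite, you should cite at this point rather than promise a Beck--Chevalley argument. A further minor inaccuracy: straightening the columns as right fibrations only produces space-valued data, whereas the vertical entries of the grid must be functors between the horizontal fiber categories; these arise from the right-fibration condition in \emph{all} horizontal simplicial degrees at once, not column-by-column.

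Your step 2 is also under-argued relative to what is actually needed. ``Descent for \cocartright s because the conditions are local and pullback-stable'' is not a proof that $\core{\CocartRight{-}}$ takes the canonical colimit presentation of $\doublecat$ by the $\LinOrd{m}\boxproduct\LinOrd{n}$ to a limit: pullback-stability gives only one direction. What is required (and what the paper proves in \Cref{rightKan} and \Cref{helplemmadetectingfibrations}) is (i) descent for the underlying slices, which uses that $\PSh(\Delta\times\Delta)$ is a topos so that $\core{\left(\overslice{\PSh(\Delta\times\Delta)}{\doublecat}\right)}$ is the corresponding limit, and (ii) a detection statement: a map of bisimplicial spaces whose pullback along every $\LinOrd{m}\boxproduct\LinOrd{n} \to \doublecat$ is a \cocartright{} of double categories is itself one, including the nontrivial point that its source is again a double category; the cocartesian part of this detection rests on the criterion that cocartesian fibrations are detected on pullbacks along maps from $\LinOrd{2}$. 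So the skeleton of your argument (reduce to generators, then identify both sides there) does match the paper's, but both of its load-bearing steps --- the Kan-extension/descent property of the fibration side and the identification over $\LinOrd{m}\boxproduct\LinOrd{n}$ --- are asserted rather than established.
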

This confirms an expectation outlined in \cite[Remark 2.14]{nui21}. Combining \eqref{intro-Ortho=Cor} and \Cref{intro-str-unst} yields an (un)straightening equivalence for fibrations of orthogonal factorization systems.

Among orthogonal factorization there are those which are called \emph{adequate} (in the sense of Barwick \cite{bar17}), those admitting certain pullbacks that make it possible to define the span category.
We check that under the embedding from \Cref{intro-thm}, an orthogonal factorization system $\sOFS$ is adequate if and only if $\horop{\DCLR(\sOFS)}$, the opposite in the horizontal direction of the associated double $\infty$-category, is also contained in the essential image of $\DCLR$. In this case, there is an equivalence \[\horop{\DCLR\left(\sOFS\right)} \cong \DCLR\left(\mathrm{Span}\left(\sOFS\right)\right) \,.\] We hence recover the span category construction of adequate orthogonal factorization systems, which usually involves a certain amount of simplicial combinatorics. In fact we do so by computing the whole automorphism group of the $\infty$-category of adequate orthogonal factorization systems $\OFSAd$, refining \cite[Cor. 5.7]{hhln23a}:
\begin{thmx}[{\Cref{Aut(AOFS)}}] \label{intro-Aut}
    There is an equivalence of groups
    \[
        \Aut\left(\OFSAd\right) \cong \mathbb{Z}/2\mathbb{Z}
    \]
    with generator given by the span category functor.
\end{thmx}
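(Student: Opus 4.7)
The plan is to use Theorem~A to convert the question into one about the full subcategory $\DCLR(\OFSAd) \subseteq \DblCat$ and then exploit the two symmetric conditions cutting out this subcategory.

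First I would identify the nontrivial element of $\Aut(\OFSAd)$ explicitly. By the characterization recalled in the introduction, $\DCLR(\OFSAd)$ is precisely the intersection $\DCLR(\OFS) \cap \horop{\DCLR(\OFS)}$ inside $\DblCat$, and for $\sOFS$ adequate we have $\horop{\DCLR(\sOFS)} \simeq \DCLR(\mathrm{Span}(\sOFS))$. Hence the horizontal-opposite involution $\horop{(-)}$ on $\DblCat$ restricts to an involution on $\DCLR(\OFSAd)$, which under the equivalence $\OFSAd \simeq \DCLR(\OFSAd)$ of Theorem~A becomes the span construction. This produces a homomorphism $\mathbb{Z}/2 \to \Aut(\OFSAd)$ with nontrivial image $\mathrm{Span}$; the map is injective because $\mathrm{Span}$ is not equivalent to the identity, e.g.\ it does not preserve the forgetful functor $\OFSAd \to \CatInfty$ to the underlying $\infty$-category. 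This already recovers (and is essentially the content of) \cite[Cor.\ 5.7]{hhln23a}.

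For surjectivity, the strategy is to show that an autoequivalence of the full subcategory $\DCLR(\OFSAd) \subseteq \DblCat$ is rigid: it must permute the horizontal/vertical directions in a way compatible with the asymmetric unique-bottom-left filling condition. Concretely, I would pin down a small generating family in $\OFSAd$ — for instance the free adequate OFS on a single morphism marked as left or as right — and observe that there are exactly two objects in this family (swapped by $\mathrm{Span}$); every autoequivalence must permute them, and by naturality this determines it up to the identity or $\mathrm{Span}$. Equivalently, one can argue at the level of $\DblCat$: any autoequivalence of $\DCLR(\OFSAd)$ is induced by one of the ``obvious'' involutions of $\DblCat$ generated by $\horop,\verop,\swap$; among these only $\horop$ preserves $\DCLR(\OFS)$, because $\verop$ and $\swap$ both destroy the asymmetric unique-filling condition that defines $\DCLR(\OFS)$, and so only $\horop$ (i.e.\ $\mathrm{Span}$) and the identity survive.

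The main obstacle will be making the rigidity step precise: ruling out ``exotic'' autoequivalences of the full subcategory $\DCLR(\OFSAd)$ that do not arise from a global operation on $\DblCat$. This likely requires either identifying an explicit set of compact generators of $\OFSAd$ together with their automorphism spaces (so that any autoequivalence is determined by a finite amount of combinatorial data), or upgrading Theorem~A to an equivalence of $\infty$-categories with additional structure (say with the horizontal-opposite action) so that equivariance forces the classification. The $1$-categorical analogue of \cite[Thm.\ 3.30]{ms23} may serve as a blueprint for the right rigidification argument.
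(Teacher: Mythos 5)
Your construction of the injection $\mathbb{Z}/2\mathbb{Z}\to\Aut(\OFSAd)$ is fine in outline, with one caveat: the identification $\horop{\DCLR(\sOFS)}\cong\DCLR(\Span(\sOFS))$ that you take as input is, in the paper, \emph{deduced from} the classification of automorphisms (it is \Cref{span=verop}, proved by observing that $\Span$ and $\corners\circ\horop{(-)}\circ\DCLR$ are both non-identity automorphisms and hence must coincide once $\Aut\cong\mathbb{Z}/2\mathbb{Z}$ is known); proving it directly would require the simplicial comparison with Barwick's construction that this approach is designed to avoid. For the injection you only need that $\horop{(-)}$ is a non-trivial involution of $\DblCatAd$, which is immediate, so this is a presentational issue rather than a gap.

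The genuine gap is surjectivity, which is the entire content of the theorem, and your proposal leaves it where you yourself flag the obstacle: you assert that an autoequivalence must permute the two free objects on a marked morphism and that ``by naturality this determines it up to the identity or $\Span$'', and alternatively that any autoequivalence of $\DCLR(\OFSAd)$ is induced by one of $\horop{(-)},\verop{(-)},\swapop{(-)}$ --- but both assertions are precisely the exotic-autoequivalence problem, and neither is justified. The two objects $\LinOrd{1}\prodOFS\LinOrd{0}$ and $\LinOrd{0}\prodOFS\LinOrd{1}$ do not by themselves rigidify an autoequivalence: one needs to control its values on all of $\Delta\times\Delta$ and then on the whole category. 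The paper's proof of \Cref{Aut(AOFS)} supplies exactly this: an autoequivalence $F$ of $\DblCatAd$ preserves the terminal object $\LinOrd{0}\boxproduct\LinOrd{0}$, hence preserves the number of objects; an indecomposability and back-and-forth argument forces $F$ to permute $\{\LinOrd{1}\boxproduct\LinOrd{0},\LinOrd{0}\boxproduct\LinOrd{1}\}$; a pushout analysis (any pushout of these two along $\LinOrd{0}\boxproduct\LinOrd{0}$ is either $\LinOrd{1}\boxproduct\LinOrd{1}$ or has three objects) shows $F$ preserves $\LinOrd{1}\boxproduct\LinOrd{1}$, so $F$ restricts to the identity or $\horop{(-)}$ on $\Delta_{\le1}\times\Delta_{\le1}$; the Segal condition plus the injection $\map_{\Delta}(\LinOrd{k},\LinOrd{m})\hookrightarrow\map_{\Delta}(\LinOrd{0},\LinOrd{1})^{\times(k+1)(m+2)}$ show that this restriction determines $F$ on $\Delta\times\Delta$; and since $\DblCatAd$ is reflective in $\PSh(\Delta\times\Delta)$, colimit-preserving endofunctors are left Kan extended from $\Delta\times\Delta$, so $F$ itself is determined. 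Finally --- a point your proposal does not address at all --- the statement is an equivalence of \emph{groups}, so one must also show the relevant automorphism space is discrete; the paper does this by noting the identity of $\Delta\times\Delta$ has no non-trivial automorphisms. Without these steps (or a genuinely different rigidification, e.g.\ your suggested equivariant upgrade of Theorem~A, which is not carried out), the proof is incomplete.
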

\subsection*{Organization of the paper}
In \Cref{section-preliminaries}, we collect some background material on double $\infty$-categories and orthogonal factorization systems. \Cref{section-mainthm} is dedicated to the proof of \Cref{intro-thm}. In \Cref{section-fibrations}, we study fibrations, proving \Cref{intro-str-unst}. Finally, we discuss adequate orthogonal factorization systems in \Cref{section-adequate} and prove \Cref{intro-Aut}.
\subsection*{Acknowledgments}
I would like to thank Sil Linskens for telling me about this potential connection between double categories and orthogonal factorization systems. I am grateful to Jan Steinebrunner and Jaco Ruit for helpful conversations. I would also like to thank my supervisor Jesper Grodal for his guidance and advice. The author was supported by the DNRF through the Copenhagen Center for Geometry and Topology (DNRF151).

\subsection*{Conventions}
\begin{itemize}
    \item We use the theory of $\infty$-categories as developed by Lurie in \cite{lur09} and \cite{lur17}. However, we make no essential use of the concrete model and the arguments should apply in most other models. Since most of the categories appearing in this paper are higher categories, we use the term \emph{category} to refer to an $\infty$-category and say $(1,1)$-category if the mapping spaces are discrete.
    \item We fix three nested Grothendieck universes $U \in V \in W$. We call a category \emph{very large} if it is defined in $W$ and \emph{large} if is defined in $V$. If we just say category, we assume that it is defined in $U$.
    \item We write $\An$ for the large category of spaces (or animae, homotopy types, $\infty$-groupoids, ...) and $\CatInfty$ for the large category of categories. 
    Moreover, we write $(-)^\simeq \colon \CatInfty \to \An$ for the core functor. 
    \item The simplex category is denoted by $\Delta$. Its objects are finite posets $\LinOrd{n} = \left\{ 0,1, \dots , n \right\}$.     
    We write $d_i \colon \LinOrd{n-1} \to \LinOrd{n}$ for the injective map omitting $i$ and $\Segalmap_i \colon \LinOrd{1} \to \LinOrd{n}$ for the map sending $0$ to $i-1$ and $1$ to $i$.
    
    We use the same notation to denote the category obtained from this poset via the inclusions $\operatorname{Poset} \hookrightarrow \CatInfty$.
    \item We use $\Lambda_2^2 = (0 \to 2 \leftarrow 1)$ for the cospan $(1,1)$-category (i.e. the $2$-horn of the $2$-simplex).
    \item For $\cat$ a category, we write $\Ar{\cat} =\Fun(\LinOrd{1},\cat)$ for the \emph{arrow category} of $\cat$.
    \item We write $\PSh(\mathcal C) = \Fun(\mathcal C^{\op},\An)$ for the large presheaf category of a category $\mathcal C$.
    \item A full subcategory is called \emph{reflective} if the inclusion functor admits a left adjoint. By \cite[Prop. 5.5.4.15]{lur09}, a subcategory of a large presentable category is reflective if it can be characterized as the class of local objects with respect to a small set of morphisms.
\end{itemize}

\section{Preliminaries} \label{section-preliminaries}
\begin{definition} \label{def-simplicial-space}
    A \emph{simplicial space} is a presheaf on $\Delta$, i.e. a functor \[ X \colon \Delta^{\op} \longrightarrow \An \,. \] We call it a \emph{Segal space} if \[ \prod_{i=1}^{n} \Segalmap_i \colon X_n \longrightarrow  X_1 \times_{X_0} \dots \times_{X_0} X_1 \] is an equivalence. 

    We call a Segal space \emph{complete} if \[ \begin{tikzcd}
        X_0 \ar[r,"\Delta"] \ar[d] & X_0 \times X_0 \ar[d] \\
        X_3 \ar[r] & X_1 \times X_1
    \end{tikzcd} \] is a pullback. 
    Here, the vertical morphisms are induced by the unique degeneracy map to $\LinOrd{0}$ and the bottom map is the one induced by the inclusions $\LinOrd{1} \to \LinOrd{3}$ onto $\left\{ 0,2 \right\}$ and $\left\{ 1,3 \right\}$, respectively. 
\end{definition}
\begin{remark} \label{checkcompleteness}
    Let $X$ be a Segal space and let $x,y \in X_0$. Let us write 
    \[ \map_X(x,y) = \left\{ x \right\} \times_{X_0,d_1} X_1 \times_{d_0,X_0} \left\{ y \right\} \,. \]
    Using the higher Segal conditions, one can define composition maps. It is proved in \cite{rez01} that a Segal space is complete if and only if the degeneracy map \[ X_0 \longrightarrow X_1 \] is fully faithful with essential image being the equivalences, i.e. the maps $f \colon x \to y$ such that composition with $f$ induces equivalences 
    \begin{align*} 
        f_\ast \colon \map_X(z,x) &\longrightarrow \map(z,y) \intertext{and} f^\ast \colon \map_X(y,z) &\longrightarrow \map_X(x,z)
    \end{align*}
    for every object $z$ of $X_0$.
\end{remark}
\begin{theorem}[Joyal-Tierney {\cite{jt07}}] \label{rezk-thm}
    The restricted Yoneda embedding \begin{align*}
        \CatInfty & \longrightarrow \Fun(\Delta^{\op},\An) \\
        \cat & \longmapsto \map(\LinOrd{n},\cat)
    \end{align*}
    is fully faithful with essential image the complete Segal spaces. 
\end{theorem}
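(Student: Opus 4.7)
The plan is to split the theorem into three pieces: (i) the restricted Yoneda embedding factors through complete Segal spaces; (ii) it is fully faithful; (iii) every complete Segal space lies in its essential image. Each piece reduces to a basic equivalence in $\CatInfty$, together with the density of $\Delta$ in $\CatInfty$.

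For (i), the Segal condition follows from the equivalence $\spine{n} \eqto \LinOrd{n}$ in $\CatInfty$, which holds because $\LinOrd{n}$ is the iterated pushout $\LinOrd{1} \sqcup_{\LinOrd{0}} \cdots \sqcup_{\LinOrd{0}} \LinOrd{1}$; applying $\map(-,\cat)$ turns this pushout into the product diagram. The completeness condition follows analogously from the fact that the walking isomorphism is equivalent in $\CatInfty$ to $\LinOrd{0}$ and can be presented as a pushout involving $\LinOrd{3}$, $\LinOrd{1}$ and $\LinOrd{0}$; applying $\map(-,\cat)$ then yields the pullback square of \Cref{def-simplicial-space}.

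For (ii), I would invoke the density of $\Delta$ in $\CatInfty$: every category is the canonical colimit $\cat \eqto \colim_{\LinOrd{n} \to \cat} \LinOrd{n}$ of its simplices. Granting this, fully faithfulness follows from the standard density calculation
\[ \map_{\PSh(\Delta)}(\map(\LinOrd{\bullet},\cat), \map(\LinOrd{\bullet},\catS)) \simeq \lim_{\LinOrd{n} \to \cat} \map(\LinOrd{n},\catS) \simeq \map_{\CatInfty}(\cat,\catS), \]
where the first equivalence is Yoneda in $\PSh(\Delta)$ and the second uses density.

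Step (iii) is the substantive one. The cleanest route is to exhibit both $\CatInfty$ and the full subcategory of complete Segal spaces in $\PSh(\Delta)$ as the same reflective localization of $\PSh(\Delta)$ at the Segal maps together with the completeness morphism; since the restricted Yoneda embedding factors through this localization and agrees with the representables on $\LinOrd{n}$, it must be an equivalence onto the complete Segal spaces. The main obstacle is establishing the density used in (ii), which effectively encodes the universal property of $\CatInfty$ itself; once density is available, the rest is formal.
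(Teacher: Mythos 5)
The paper does not prove this statement at all: it is imported as a black box, attributed to Joyal--Tierney \cite{jt07} (with the idea due to Rezk \cite{rez01}) and with a pointer to \cite{hs23} for a simple proof, so there is no internal argument to compare yours against. Judged on its own terms, your outline has the right shape. Step (i) is fine: the spine decomposition of $\LinOrd{n}$ gives the Segal condition, and the presentation of the walking isomorphism as $\LinOrd{3}$ with the edges $\left\{0,2\right\}$ and $\left\{1,3\right\}$ collapsed gives completeness --- though that presentation is itself a nontrivial lemma you would have to prove or cite, not something that ``follows analogously''.

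The genuine gap is that the two steps carrying the actual content of the theorem are deferred rather than argued. For (ii) you reduce full faithfulness to density of $\Delta$ in $\CatInfty$ and admit you do not establish density; but density of a functor is \emph{equivalent} to full faithfulness of the associated restricted Yoneda embedding, so this reduction only restates the claim. Step (iii) as written is circular in the same way: that the complete Segal spaces are exactly the local objects for the Segal and completeness maps is true essentially by definition of that localization of $\PSh(\Delta)$, but the assertion that $\CatInfty$ is \emph{the same} localization is precisely full faithfulness plus essential surjectivity, i.e.\ the theorem itself. You offer no mechanism for showing that a presheaf which is local for those maps actually lies in the image of the restricted Yoneda embedding; this is where every real proof does its work (Joyal--Tierney via a Quillen equivalence between quasi-categories and complete Segal spaces, or the more direct argument in \cite{hs23}). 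So the proposal is a correct skeleton but not a proof: the soft parts are right, and the hard parts are exactly the ones left unproved.
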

The idea of the above theorem goes back to Rezk \cite{rez01}. A simple proof of the above theorem can be found in \cite{hs23}.

Before turning towards the definition of a double category, we recall the following:
\begin{proposition}[{\cite[Prop. 1.7]{bri18}}]  \label{rightfibrations}
    Let $F \colon \catS \to \cat$ be a map of Segal spaces. The following are equivalent:
    \begin{itemize}
        \item The square \[ \begin{tikzcd}[sep=huge]
            \catS_n \ar[r,"{F_n}"] \ar[d,"{d_0}"] & \cat_n \ar[d,"d_0"] \\
            \catS_0 \ar[r,"F_0"] & \cat_0
        \end{tikzcd} \]
        is cartesian.
        \item The above square is cartesian for $n=1$.
    \end{itemize}
\end{proposition}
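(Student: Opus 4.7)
The implication from the first bullet to the second is clear by specialization to $n = 1$. For the converse, the plan is to induct on $n$, where by $d_0 \colon X_n \to X_0$ I understand the iterated face map picking out the last vertex. The base cases $n = 0$ (trivial) and $n = 1$ (the hypothesis) are immediate, so assume $n \ge 2$.

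The key ingredient is the Segal condition, which for each Segal space $X$ yields an equivalence
\[ X_n \;\simeq\; X_{n-1} \times_{X_0} X_1 \]
with structure maps $d_0^{n-1} \colon X_{n-1} \to X_0$ on the left and $d_1 \colon X_1 \to X_0$ on the right (gluing along the shared vertex $x_{n-1}$); under this splitting, the ``last vertex'' map $X_n \to X_0$ becomes projection to the second factor followed by $d_0 \colon X_1 \to X_0$. Applying this decomposition to both $\catS$ and $\cat$ and invoking pullback associativity together with the $n = 1$ hypothesis, one computes
\[ \cat_n \times_{\cat_0} \catS_0 \;\simeq\; \cat_{n-1} \times_{\cat_0} (\cat_1 \times_{\cat_0} \catS_0) \;\simeq\; \cat_{n-1} \times_{\cat_0} \catS_1. \]
Re-associating the remaining pullback as $(\cat_{n-1} \times_{\cat_0} \catS_0) \times_{\catS_0} \catS_1$ and applying the inductive hypothesis at step $n - 1$ identifies the first factor with $\catS_{n-1}$, after which the Segal decomposition of $\catS$ produces $\catS_n$, as desired.

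The main piece of bookkeeping, and the only thing requiring care, is verifying that the structural maps used to form the various pullbacks genuinely agree across these rewrites; this reduces to naturality of the face maps $d_0$ and $d_1$ with respect to $F \colon \catS \to \cat$, and to the observation that the iterated $d_0^n$ is compatible with the Segal splitting in the way described above. Once this is in place, the induction is essentially formal, and no $\infty$-categorical subtleties beyond those already encoded in the Segal condition arise.
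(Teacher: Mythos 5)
Your argument is correct. Note that the paper itself gives no proof of this proposition -- it is imported verbatim from the cited reference -- and your induction (two-term Segal splitting $X_n \simeq X_{n-1}\times_{X_0}X_1$ along the last vertex, pullback associativity, the $n=1$ hypothesis, then the inductive step) is essentially the standard argument for that cited result; the only point requiring care is the one you flag, namely that the chain of equivalences is implemented by the canonical comparison map $\catS_n \to \cat_n\times_{\cat_0}\catS_0$, which follows from naturality of the face maps with respect to $F$ exactly as you say.
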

In this case, the map is called a \emph{right fibration}. The dual notion (replacing $d_0$ by $d_n$) is called \emph{left fibration}.
\begin{definition}
    A \emph{double category} is a bisimplicial space 
    \begin{align*} 
        \doublecat \colon \Delta^{\op} \times \Delta^{\op} & \longrightarrow \An \\
        (\LinOrd{m},\LinOrd{n}) & \longmapsto \doublecat(m,n)
    \end{align*}
    such that for all $n \ge 0 $, $\doublecat(n,-)$ and $\doublecat(-,n)$ are complete Segal spaces. 

    We write $\DblCat \subset \PSh(\Delta \times \Delta)$ for the large full subcategory of double categories. 
\end{definition}
\begin{remark}
    This definition is not used consistently in the literature.
    We follow the terminology used in \cite{nui21}.
    Some authors only require $\doublecat(-,n)$ to be a (not necessarily complete) Segal space. 
    
    In particular, our definition is not the direct higher categorical analog of the classical notion of a double category. In our definition, the space of objects in the vertical and horizontal category must agree.
\end{remark}

\begin{notation}
    We write \[ \horop{(-)} , \verop{(-)} \colon \DblCat \longrightarrow \DblCat \] for the functors precomposing with the opposite functor $(-)^{\op} \colon \Delta \to \Delta$ in the first or second coordinate, respectively, and \[ \swapop{(-)} \colon \DblCat \longrightarrow \DblCat \] for the functor which exchanges the two coordinates. 
\end{notation}

\begin{construction} 
    Given two categories $\cat$ and $\catS$, we obtain a double category $\cat \boxproduct \catS$ as follows:
    \[ (\cat \boxproduct \catS)(k,l) = \map(\LinOrd{k},\cat) \times \map(\LinOrd{l},\catS)  \] 
\end{construction}

\begin{definition}
    A double category if called a \emph{$2$-category} if $\doublecat(0,-)$ is a constant simplicial space. 
    We write $\Catn{2} \subset \DblCat$ for the large full subcategory of double categories. 
\end{definition}
\begin{remark} \label{embedding-strict-2-categories}
We may regard a strict $(2,2)$-category, i.e. a category enriched in small $(1,1)$-categories, as a $2$-category as follows: Taking nerves on mapping spaces, we obtain a category enriched in simplicial spaces from which we obtain a $2$-category using the equivalence of models for $(\infty,2)$-categories from \cite[Thm. 0.0.4]{lur092}. We will only make use of this embedding when we use the Gray tensor product of $(1,1)$-categories in the upcoming \Cref{def:square} and the only thing we need to know about this embedding is that the Gray tensor product of $\LinOrd{m}$ and $\LinOrd{n}$ agrees with the one used in \cite{hhln23b} which is ensured by \cite[Prop. 5.1.9]{hhln23b}.
\end{remark}

\begin{construction} \label{def:square} 
    Restricting the Yoneda embedding along the bicosimplicial object 
    \begin{align*} 
        \Delta \times \Delta & \longrightarrow \Catn{2} \\
        (\LinOrd{m},\LinOrd{n}) & \longmapsto \LinOrd{n} \Gray \LinOrd{m}
    \end{align*}
    sending $(\LinOrd{m},\LinOrd{n})$ to the Gray tensor product $ \LinOrd{n} \Gray \LinOrd{m}$ (a strict $(2,2)$-category),  
    induces the \emph{oplax square functor}:\begin{align*} 
        \Sq \colon \Catn{2} &\longrightarrow \DblCat \\
        \doublecat & \longmapsto \left( ( \LinOrd{m} ,\LinOrd{n}) \mapsto \Fun_{\Catn{2}}(\LinOrd{n} \Gray \LinOrd{m},\doublecat) \right) 
    \end{align*}
    Here, $\Sq(\doublecat)(n,-)$ is indeed complete Segal as the Gray tensor product preserves colimits in each variable separately. 
\end{construction}

Next we discuss orthogonal factorization systems, which we will simply call factorization system because there is no other notion of factorization systems studied in this paper.

\begin{definition}[Joyal]
    A category $\mathcal C$ together with two subcategories $\catL$ and $\mathcal \catR$ is called an \emph{factorization system} if 
    \begin{itemize}
        \item The subcategories $\catL$ and $\catR$ contain every equivalence and
        \item for any commutative square
        \[ \begin{tikzcd}[sep=huge]
            \bullet \ar[r] \ar[d,twoheadrightarrow]  & \bullet \ar[d,rightarrowtail] \\
            \bullet \ar[r] \ar[ur,dashed] & \bullet
        \end{tikzcd}\]
    \end{itemize}
    in which the left morphism is contained in $\catL$ and the right morphism is $\catR$, there is a unique dashed filler. 

    The morphisms in $\catL$ are called \emph{egressive}, the once in $\catR$ \emph{ingressive}.
    The category of factorization systems $\OFS$ is defined as the subcategory of the category of functors $ \Fun(\Lambda^2_2,\CatInfty)$ from the cospan $\Lambda^2_2 = (0 \to 2 \leftarrow 1)$ to $\CatInfty$ where $0 \to 2$ and $2 \leftarrow 1$ are sent to the inclusion of the class of egressive (and ingressive, respectively) morphisms of a factorization system. 
\end{definition}
\begin{remark}
    In \cite[Def. 5.2.8.8]{lur09}, Lurie also requires the two classes of morphisms two be closed under retracts. This condition is redundant by \cite[Sec. 1.1]{gkt18}.
\end{remark}
\begin{notation}
    Given two categories $\cat$ and $\catS$, there is a factorization system $\cat \prodOFS \catS$ on $\cat \times \catS$ with $(\cat \prodOFS \catS)_{\mathrm{eg}} = \cat \times \core{\catS}$ and $(\cat \prodOFS \catS)_{\mathrm{in}} = \core{\cat} \times \catS$.
\end{notation}
\begin{proposition}[{\cite[Prop. A.0.4]{bs24}}] \label{detectingOFS}
Let $\catL, \catR \subset \cat$ be subcategories containing every equivalence. The following are equivalent:
    \begin{itemize}
        \item The triple forms a factorization system.
        \item The restricted composition map 
        \begin{align*}
            \core{\Ar{\catL}} \times_{\core{\cat}} \core{\Ar{\catR}} &\longrightarrow \core{\Ar{\cat}} \\
            (f,g) & \longmapsto g \circ f
        \end{align*}is an equivalence.
    \end{itemize}
\end{proposition}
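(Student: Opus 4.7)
The plan is to rephrase both conditions in terms of \emph{factorization spaces}. For a morphism $h \in \core{\Ar{\cat}}$, let $\mathrm{Fact}(h)$ denote the fiber of the composition map
\[ \core{\Ar{\catL}} \times_{\core{\cat}} \core{\Ar{\catR}} \longrightarrow \core{\Ar{\cat}} \]
over $h$; informally, this is the space of diagrams $x \xrightarrow{f} y \xrightarrow{g} z$ with $f \in \catL$, $g \in \catR$ and a specified equivalence $gf \simeq h$. Condition (2) is equivalent to $\mathrm{Fact}(h)$ being contractible for every $h$, since the composition map is essentially surjective on components precisely when every arrow admits at least one factorization.

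For (2) $\Rightarrow$ (1), consider a square with $f \in \catL$ on the left, $g \in \catR$ on the right, top $u$ and bottom $v$, and write $h = gu \simeq vf$ for the diagonal. Factor $u = g_1 f_1$ through some $q$ and $v = g_2 f_2$ through some $p$ using the existence part of contractibility applied to $u$ and $v$. Since $\catL$ and $\catR$ are subcategories, we obtain two factorizations of $h$: namely $a \xrightarrow{f_1} q \xrightarrow{g g_1} d$ and $a \xrightarrow{f_2 f} p \xrightarrow{g_2} d$. Contractibility of $\mathrm{Fact}(h)$ produces a canonical equivalence $q \simeq p$ compatible with all the structure, and the composite $b \xrightarrow{f_2} p \simeq q \xrightarrow{g_1} c$ is a filler. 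To upgrade from existence to contractibility of the space of fillers, I would identify this space with a pullback of factorization spaces: the forgetful map from the space of squares equipped with a filler down to the space of squares should exhibit the former as a pullback involving $\mathrm{Fact}(h)$, forcing the fibers to be contractible.

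For (1) $\Rightarrow$ (2), existence of factorizations in an OFS immediately gives surjectivity of the composition map on $\pi_0$. To see that $\mathrm{Fact}(h)$ is contractible, note that given two factorizations $(y_1, f_1, g_1)$ and $(y_2, f_2, g_2)$ of $h$, the square
\[ \begin{tikzcd}[sep=huge]
x \ar[r,"f_1"] \ar[d,"f_2"'] & y_1 \ar[d,"g_1"] \\
y_2 \ar[r,"g_2"'] & z
\end{tikzcd} \]
has left leg in $\catL$, right leg in $\catR$, and commutes via the identifications $g_1 f_1 \simeq h \simeq g_2 f_2$. The space of paths between the two factorizations in $\mathrm{Fact}(h)$ is naturally identified with the space of fillers of this square, which is contractible by the OFS axiom. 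This argument, iterated by feeding in higher simplices of $\mathrm{Fact}(h)$ and recognizing them as higher-dimensional analogs of lifting problems, shows that all homotopy groups of $\mathrm{Fact}(h)$ vanish.

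The main obstacle is the bookkeeping required in both directions to promote the pointwise statements (existence and uniqueness of fillers or factorizations) to genuine contractibility of the relevant mapping spaces in the $\infty$-categorical sense. The cleanest execution would work functorially: exhibit a single equivalence of spaces (or a zig-zag of trivial fibrations) between the space of $(\catL,\catR)$-lifting problems with fillers and the space of $(\catL,\catR)$-factorizations over a common base of diagrams in $\cat$, so that the combined statement drops out from one pullback computation rather than from case-by-case homotopy analysis.
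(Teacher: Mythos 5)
The paper itself gives no proof of this proposition (it is imported verbatim from [bs24, Prop.\ A.0.4]), so your attempt can only be judged on its own terms. The skeleton is sound: reformulating condition (2) as contractibility of the factorization spaces $\mathrm{Fact}(h)$, and both existence arguments (in particular the construction of a filler from a path between the two induced factorizations of the diagonal $h$) are correct. The gaps sit exactly at the two contractibility upgrades, which are the actual content of the statement. In (1)$\Rightarrow$(2), a path in $\mathrm{Fact}(h)$ from $(f_1,g_1)$ to $(f_2,g_2)$ consists of an \emph{equivalence} $y_1\to y_2$ together with compatible triangle homotopies, whereas a filler of the square you draw is an a priori arbitrary morphism $y_2\to y_1$ in the \emph{opposite} direction; your \enquote{natural identification} therefore presupposes the standard but unproven lemma that any filler comparing two $(\catL,\catR)$-factorizations of the same morphism is invertible (fill also the transposed square and apply uniqueness of fillers to the two squares with both columns equal to see the two fillers are mutually inverse). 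Once that identification is established as an equivalence of \emph{spaces}, $\mathrm{Fact}(h)$ is nonempty with contractible path spaces between any pair of points, hence contractible; your closing appeal to \enquote{iterating with higher simplices} is then unnecessary, and as written it is not an argument --- it signals that the identification has only been checked on $\pi_0$.

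In (2)$\Rightarrow$(1), contractibility of the space of fillers is only an announced intention: no pullback is exhibited, and it is unclear what \enquote{a pullback involving $\mathrm{Fact}(h)$} should mean, since the top and bottom edges of a general lifting problem are arbitrary morphisms and a filler does not produce a point of any $\mathrm{Fact}(-)$. The way to realize your own closing suggestion (one global equivalence instead of pointwise analysis) is this: the space of lifting problems equipped with a filler is the space of chains $\LinOrd{3}\to\cat$ whose first edge lies in $\catL$ and whose last edge lies in $\catR$, i.e.\ $\core{\Ar{\catL}}\times_{\core{\cat}}\core{\Ar{\cat}}\times_{\core{\cat}}\core{\Ar{\catR}}$, while, using $\LinOrd{1}\times\LinOrd{1}\cong\LinOrd{2}\cup_{\LinOrd{1}}\LinOrd{2}$ as in the proof of \Cref{lem-defDCLR}, the space of lifting problems is $\left(\core{\Ar{\catL}}\times_{\core{\cat}}\core{\Ar{\cat}}\right)\times_{\core{\Ar{\cat}}}\left(\core{\Ar{\cat}}\times_{\core{\cat}}\core{\Ar{\catR}}\right)$; substituting the equivalence of condition (2) for each occurrence of $\core{\Ar{\cat}}$ and simplifying identifies both sides with the space of $(\catL,\catL,\catR,\catR)$-chains and shows the restriction map between them is an equivalence, so all of its fibers --- the filler spaces --- are contractible. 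Without this (or an equivalent) computation, your argument establishes only existence, not uniqueness, of fillers. Finally, note that you implicitly use the factorization-existence axiom in (1)$\Rightarrow$(2); this is part of Lurie's definition cited by the paper, even though the paper's own wording of the definition lists only the lifting condition.
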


\section{Double categories and factorization systems} 
\label{section-mainthm}

In this section, we will first define a subcategory of \emph{factorization double categories} $\DblCatOF \subset \DblCat$ (\Cref{def-DblOF}). 
We will construct the functor \[ \DCLR \colon \OFS \longrightarrow \DblCatOF \] in \Cref{def-DCLR} and \Cref{lem-defDCLR}. Then, we construct its inverse functor \[ \corners \colon \DblCatOF \longrightarrow \OFS \] in \Cref{constr-corners}. 
Afterwards, we will explicitly define unit and counit transformations and check that they are equivalences, proving \Cref{main-theorem}.

\begin{proposition} \label{equ-def-DblOF}
    For $\doublecat$ a double category, the following are equivalent
    \begin{enumerate}
        \item The square \[ 
        \begin{tikzcd}[sep = huge]
            \doublecat(1,1) 
            \ar[r,"{\doublecat(\id,d_0)}"]  
            \ar[d,swap, "{\doublecat(d_1,\id)}"]& \doublecat(1,0) \ar[d,"{\doublecat(d_1,\id)}"] 
            \\
            \doublecat(0,1) 
            \ar[r,"{\doublecat(\id,d_0)}"] 
            & \doublecat(0,0)
        \end{tikzcd} \]
        is cartesian.
        \item The functor 
        \[ \doublecat(-,d_0) \colon \doublecat(-,1) \longrightarrow \doublecat(-,0)\] is a left fibration.
        \item The functor 
        \[ \doublecat(d_1,-) \colon \doublecat(1,-) \longrightarrow \doublecat(0,-) \] is a right fibration.
        \item The functor 
        \[ \doublecat(-,d_0) \colon \doublecat(-,n) \longrightarrow \doublecat(-,0)\] is a left fibration for all $n \ge 0$.
        \item \label{right-all-n} The functor  
        \[ \doublecat(d_n,-) \colon \doublecat(n,-) \longrightarrow \doublecat(0,-) \] is a right fibration for all $n \ge 0$.
    \end{enumerate}
\end{proposition}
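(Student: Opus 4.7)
The plan is to first establish (1) $\Leftrightarrow$ (2) $\Leftrightarrow$ (3) by directly applying \Cref{rightfibrations}, and then to upgrade (2) and (3) to the \enquote{all $n$} versions in (4) and (5) by induction using the Segal decomposition in the other coordinate.

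For (1) $\Leftrightarrow$ (2), note that (2) asserts that $\doublecat(-, d_0) \colon \doublecat(-, 1) \to \doublecat(-, 0)$ is a left fibration of Segal spaces in the first coordinate. By the dual form of \Cref{rightfibrations}, this is equivalent to the pullback condition at first-coord level~$1$, which is precisely the cartesian square of (1): its vertical faces are $d_1$ in the first coordinate (i.e.\ \enquote{source}, as required for the left-fibration test) and its horizontal faces are $d_0$ in the second. For (1) $\Leftrightarrow$ (3), I apply \Cref{rightfibrations} directly to the map $\doublecat(d_1, -) \colon \doublecat(1, -) \to \doublecat(0, -)$ of Segal spaces in the second coordinate; the resulting level-$1$ square is the transpose of the square in (1), and being cartesian is invariant under transposition.

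The implications (4) $\Rightarrow$ (2) and (5) $\Rightarrow$ (3) are just the case $n = 1$. For the converse (2) $\Rightarrow$ (4) (and analogously (3) $\Rightarrow$ (5)), I proceed by induction on $n$. The cases $n = 0$ and $n = 1$ are trivial or by hypothesis. For the inductive step, I view $\doublecat(-, n)$ as a category via \Cref{rezk-thm} and use the Segal decomposition
\[
\doublecat(-, n) \simeq \doublecat(-, n-1) \times_{\doublecat(-, 0)} \doublecat(-, 1),
\]
which is a pullback in $\CatInfty$ formed with the \enquote{last vertex} map of $\doublecat(-, n-1)$ and the \enquote{first vertex} $d_1$ map of $\doublecat(-, 1)$ (both in the second coordinate). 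Under this decomposition, the last-vertex map $\doublecat(-, n) \to \doublecat(-, 0)$ factors as the projection to the $\doublecat(-, 1)$ factor followed by $d_0$. The inductive hypothesis provides that $\doublecat(-, n-1) \to \doublecat(-, 0)$ is a left fibration, and (2) provides that $\doublecat(-, 1) \to \doublecat(-, 0)$ is a left fibration. Stability of left fibrations under pullback and composition in $\CatInfty$ then yields (4).

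The main obstacle I expect is the face-map bookkeeping inside the Segal decomposition: carefully identifying which face map in the second coordinate governs the Segal gluing (matching \enquote{last vertex} of one factor with \enquote{first vertex} $d_1$ of the next) and verifying that the iterated last-vertex map factors through the intended Segal component. Once these identifications are in place, the argument reduces to the standard stability properties of left and right fibrations.
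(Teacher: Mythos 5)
Your argument is correct, and the first half — identifying the square in (1) as the degree-$1$ test square of \Cref{rightfibrations} for both (2) and (3), transposition being harmless — is exactly what the paper does. Where you diverge is the passage to all $n$: you induct on $n$ using the Segal decomposition $\doublecat(-,n) \simeq \doublecat(-,n-1) \times_{\doublecat(-,0)} \doublecat(-,1)$, view everything in $\CatInfty$ via \Cref{rezk-thm}, and invoke stability of left/right fibrations under pullback and composition. The paper instead gets (4) and (5) with no induction and no use of the Segal condition at all: condition (2) (resp.\ (3)), being a left (resp.\ right) fibration, provides cartesian squares in \emph{every} simplicial degree of its coordinate, and the degree-$n$ square is, up to transposition, precisely the degree-$1$ square that \Cref{rightfibrations} requires in order to test the fibration condition for $\doublecat(n,-) \to \doublecat(0,-)$ (resp.\ $\doublecat(-,n) \to \doublecat(-,0)$); so one simply applies \Cref{rightfibrations} once in each coordinate. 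Your route is perfectly sound — the closure properties you cite, though not stated in the paper, follow in two lines from the square criterion (for composition, $X_1 \simeq X_0 \times_{Y_0} Y_1 \simeq X_0 \times_{Z_0} Z_1$; similarly for base change), and the Segal pullback of complete Segal spaces is computed levelwise, so your use of \Cref{rezk-thm} is legitimate — but it costs you the face-map bookkeeping you flag as the main obstacle, plus these auxiliary stability facts, whereas the paper's symmetry observation makes the "all $n$" upgrade purely formal. In short: same proof of (1)$\Leftrightarrow$(2)$\Leftrightarrow$(3), a genuinely different (inductive, Segal-based) and slightly heavier, but correct, proof of (2)$\Rightarrow$(4) and (3)$\Rightarrow$(5).
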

\begin{proof}
    The first condition is the weakest. It implies the second and third condition by \Cref{rightfibrations}. 
    The second condition implies the fourth by evaluating at $n$ and using \Cref{rightfibrations}. Similarly, the third condition implies the fifth.
\end{proof}

\begin{definition} \label{def-DblOF}
    If a double category satisfies one of the equivalent conditions from \Cref{equ-def-DblOF}, we call it a \emph{factorization double category}. We write $\DblCatOF \subset \DblCat$ for the full subcategory spanned by the factorization double categories. 
\end{definition}

\begin{construction} \label{def-DCLR}
    Restricting the Yoneda embedding along the bicosimplicial object \begin{align*}
        \Delta \times \Delta & \longrightarrow \OFS \\
        (\LinOrd{m},\LinOrd{n}) & \longrightarrow [m] \prodOFS [n]
    \end{align*} 
    yields a functor \[ \DCLR \colon \OFS \longrightarrow \PSh(\Delta \times \Delta) \,. \] 
\end{construction}

\begin{lemma} \label{lem-defDCLR}
    The functor $\DCLR$ takes values in factorization double categories. 
\end{lemma}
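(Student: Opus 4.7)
The plan is to verify that $\DCLR(\cat)$ is (a) a double category---that is, Segal and complete in each simplicial direction---and (b) satisfies condition (1) of \Cref{equ-def-DblOF}. Since $\catL, \catR \hookrightarrow \cat$ are subcategory inclusions, $\DCLR(\cat)(m,n)$ is the subspace of $\core{\Fun(\LinOrd{m} \times \LinOrd{n}, \cat)}$ consisting of functors sending each generator in the $\LinOrd{m}$-direction to a morphism in $\catL$ and each generator in the $\LinOrd{n}$-direction to a morphism in $\catR$.

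The Segal conditions in each variable follow from the colimit decomposition $\LinOrd{m} \times \LinOrd{n} \simeq (\LinOrd{1} \times \LinOrd{n}) \sqcup_{\LinOrd{0} \times \LinOrd{n}} \cdots \sqcup_{\LinOrd{0} \times \LinOrd{n}} (\LinOrd{1} \times \LinOrd{n})$ in $\CatInfty$: the ambient mapping space into $\cat$ already decomposes as the desired iterated pullback, and the subspace constraints respect this decomposition since they are conditions on generators, each of which lies in exactly one of the $\LinOrd{1} \times \LinOrd{n}$ pieces. For completeness in the first variable I would apply \Cref{checkcompleteness}: identify $\DCLR(\cat)(0,n)$ with the space of $n$-chains of morphisms in $\catR$ and $\DCLR(\cat)(1,n)$ with natural transformations between such chains whose components lie in $\catL$. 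Since $\catL$ and $\catR$ contain all equivalences, the equivalences in the Segal space $\DCLR(\cat)(-,n)$ are precisely the componentwise equivalences of such chains, which coincide with the image of the degeneracy map. Completeness in the second variable is symmetric.

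To verify condition (1) of \Cref{equ-def-DblOF}, write $O = \core{\cat}$, $A = \core{\Ar{\cat}}$, $E = \core{\Ar{\catL}}$, $I = \core{\Ar{\catR}}$. Tracing through $d_0$ and $d_1$, a point of $\DCLR(\cat)(1,1)$ is a morphism $h \colon a \to d$ together with a factorization through $\catR$ then $\catL$ (the left-bottom path) and a factorization through $\catL$ then $\catR$ (the top-right path). This identifies
\[ \DCLR(\cat)(1,1) \simeq (I \times_O E) \times_A (E \times_O I), \]
while $\DCLR(\cat)(0,1) \times_{\DCLR(\cat)(0,0)} \DCLR(\cat)(1,0) = I \times_O E$. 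By \Cref{detectingOFS}, the composition map $E \times_O I \to A$ is an equivalence, so the pullback collapses to $I \times_O E$.

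The main subtlety is bookkeeping: correctly matching $d_1$ with the source map and $d_0$ with the target map, and tracking the orientation convention (horizontal $= \catL$, vertical $= \catR$) so that the factorization collapsing via \Cref{detectingOFS} is the $(\catL,\catR)$-one rather than the $(\catR,\catL)$-one. Once this is in place the proof reduces to a direct invocation of \Cref{detectingOFS}.
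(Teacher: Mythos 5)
Your proposal is correct and follows essentially the paper's own argument: the decisive step is the same identification of $\DCLR(\sOFS)(1,1)$ with the pullback of the two triangle-restrictions over $\core{\Ar{\cat}}$, collapsed via \Cref{detectingOFS}. The only difference is presentational: for the complete-Segal conditions the paper identifies $\DCLR(\sOFS)(n,-)$ as the Rezk nerve of an explicit non-full subcategory of $\Fun(\LinOrd{n},\cat)$ and cites \Cref{rezk-thm}, whereas you check the Segal condition via the spine decomposition and completeness via \Cref{checkcompleteness} by hand (where "image of the degeneracy map" should be read as its essential image, i.e.\ the path components of degenerate edges, which works since the defining conditions are invariant under natural equivalence).
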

\begin{proof}
    Let $\sOFS=(\cat,\catL,\catR)$ be a factorization system.
    The simplicial space $\DCLR(n,-)$ is a complete Segal space by \Cref{rezk-thm}: It arises as the Rezk nerve of the (non-full) subcategory of $\Fun(\LinOrd{n},\cat)$ spanned by the functors sending each morphism of $\LinOrd{n}$ to a morphism in $\catL$ and by the natural transformations which (pointwise) take values in $\catR$.
    Analogously, $\DCLR(-,n)$ is a complete Segal space.

    The category $\LinOrd{1} \times \LinOrd{1}$ is the pushout of two copies of $\LinOrd{2}$ along $\LinOrd{1}$. Mapping out of this pushout, passing to subspaces and also using that $\LinOrd{2}$ is a pushout of $\LinOrd{1}$ and $\LinOrd{1}$ along $\LinOrd{0}$ yields the following pullback:
    \[ \begin{tikzcd}[sep = huge]
        \map_{\OFS}(\LinOrd{1} \prodOFS \LinOrd{1},\sOFS) \ar[d,swap,"{(d_1 \times \id),(\id \times d_0)}"] \ar[r,"{(\id \times d_1),(d_0 \times \id)}"] & \core{\Ar{\catL}} \times_{\core{\cat}} \core{\Ar{\catR}} \ar[d,"\circ"] \\
    \core{\Ar{\catR}} \times_{\core{\cat}} \core{\Ar{\catL}} \ar[r,"\circ"] & \core{\Ar{\cat}}
    \end{tikzcd}  \]

    The right hand map is an equivalence by \Cref{detectingOFS}, implying that the left map is an equivalence too. Unwinding definitions, this shows that $\DCLR(\sOFS)$ is a factorization double category. 
\end{proof}
\begin{example} \label{product-OFS}
    Using that the projection $\LinOrd{k} \times \LinOrd{m} \to \LinOrd{k}$ is a localization at the class of morphisms which are constant in $\LinOrd{k}$, we find that for two categories $\mathcal C$ and $\mathcal D$, there is a natural equivalence \[ \DCLR( \cat \prodOFS \catS ) \cong \mathcal C \boxtimes \mathcal D \,. \]
\end{example}
\begin{remark}
    Actually, the above construction makes sense more generally: One can assign a double category to any category equipped with two subcategories containing every equivalence. The resulting double category is a factorization double category if and only if the subcategories form a factorization system.
\end{remark}
\begin{remark}
    It follows from \cite[Prop. 5.2]{ch21} together with \cite[Thm. 1.1]{rs22} that $\OFS$ is presentable. The functor \[ \DCLR \colon \OFS \longrightarrow \DblCat \] therefore admits a left adjoint. 
    However, this left adjoint uses colimits in the category of factorization systems, which are hard to compute. We will therefore not show directly that the counit of the adjuntion is an equivalence. 
    Instead, we will explicitly construct an inverse functor defined on the subcategory of factorization double categories. 
    It follows a posteriori that this inverse functor is the restriction of the left adjoint obtained by the adjoint functor theorem.
\end{remark}

We will now turn towards the definition of the inverse functor $\corners \colon \DblCatOF \to \OFS$.

\begin{construction} \label{defArDC}
    For $\cat$ a category, the arrow category $\Ar{\mathcal C}$ admits the structure of a factorization system with egressive morphisms the once which are an equivalence in the source and ingressive morphisms the once which are an equivalence in the target, see \cite[Exa. 4.7]{hhln23a}. We write \[ \ArDC{\cat} = \DCLR(\Ar{\cat})  \] for the \emph{double arrow category}.

    The combined source and target functor $(t,s) \colon \Ar{\cat} \to \cat \times \cat$ enhances to a functor of factorization systems to $\cat \prodOFS \cat$. This induces a functor
    \[ (t,s) \colon \ArDC{\cat} \longrightarrow \DCLR(\cat \prodOFS \cat) \cong \cat \boxproduct \cat \,. \] Postcomposing with the map $\cat \to \LinOrd{0}$ gives maps 
    \begin{align*}
        s \colon \ArDC{\cat} &\longrightarrow \LinOrd{0} \boxproduct \LinOrd{\cat} 
        \intertext{and}
        t \colon \ArDC{\cat} &\longrightarrow \LinOrd{\cat} \boxproduct \LinOrd{0} \,.
    \end{align*}
\end{construction}
\begin{remark}
    It follows from the comparison of thin and fat joins, \cite[Prop. 4.2.1.2]{lur09} that there is a natural map $\LinOrd{m} \times \LinOrd{n} \times \LinOrd{1} \longrightarrow \join{\LinOrd{n}}{\LinOrd{m}}$ which exhibits the target as the localization of the source at the morphisms which are either constantly $0$ in the $\LinOrd{1}$-coordinate and constant in the $\LinOrd{n}$-coordinate or constantly $1$ and constant in the $\LinOrd{m}$-coordinate. Unwinding definitions, we find that \[ \ArDC{\cat}(m,n) \cong \map(\join{\LinOrd{n}}{\LinOrd{m}},\cat) \,, \] which is the definition used in \cite{nui21}. 
\end{remark}

\begin{construction}
    Restricting the Yoneda embedding along the cosimplicial object 
    \begin{align*} \Delta &\longrightarrow \DblCat \\ 
     \LinOrd{n} & \longmapsto \ArDC{\LinOrd{n}}\end{align*}
     yields a functor:
     \begin{align*}
        \precorners \colon \DblCat &\longrightarrow \Fun(\Delta^{\op},\An) \\
        \doublecat & \longmapsto \left( n \mapsto \map_{\DblCat}(\ArDC{\LinOrd{n}},\doublecat) \right) 
     \end{align*}
\end{construction}

We now want to check that, when restricted to factorization double categories, this functor takes values in complete Segal spaces and that it enhances to a functor to factorization systems. 

\begin{construction}
    For fixed $n \ge 2$, let $P$ be the poset of subsets of $\LinOrd{n}=\{0, \dots , n \}$ which are either a singleton $\{i\}$ for $1 \le i \le n-1$ or a two-element subset containing two consecutive elements. 
    Using that all these subsets are linearly ordered posets and all the inclusions preserve the order, we obtain a functor $p \colon P \to \Delta$ and the inclusion of the subsets into $\LinOrd{n}$ induces a natural transformation $p \Rightarrow \const \LinOrd{n}$. 
    Let $\mathcal C$ be a category with finite colimits and let $X \colon \Delta \to \mathcal C$ be a cosimplicial object in $\cat$. We write $\genSpine[n]{X} = \colim_P (X \circ p)$ for the colimit of the composite functor. The natural transformation induces a morphism $\genSpine[n]{X} \to X(n)$, which we call the spine inclusion.

    If $X \colon \Delta \to \Fun(\Delta^{\op},\An)$ is the Yoneda embedding, we recover the inclusion of simplicial sets usually referred to as the \enquote{spine inclusion}. We will also simply denote this spine by $\spine{n}$. 
    In this case, we can also describe $I_n \to \LinOrd{n}$ more explicitly as the inclusion of the simplicial subset spanned by those $1$-simplices $\LinOrd{1} \to \LinOrd{n}$ which increase by at most $1$. The local objects with respect to those morphisms are precisely the Segal spaces. 
\end{construction}

\begin{lemma} \label{saturation-lemma}
    The saturation (under pushouts and retracts) of the following two sets of functors of double categories agree:
    \begin{itemize}
        \item the one morphism 
        \begin{equation} \label{OFSwithRLP} (d_1,\id),(\id,d_0) \colon \left(\LinOrd{0} \boxproduct \LinOrd{1} \right)\cup_{\LinOrd{0} \boxproduct \LinOrd{0}} \left(\LinOrd{1} \boxproduct \LinOrd{0}\right) \longrightarrow \LinOrd{1} \boxproduct \LinOrd{1} \,.\end{equation}
        \item the spine inclusions 
        \begin{equation} \label{SegalwithRLP}
            \genSpine[n]{\ArDC{-}} \longrightarrow \ArDC{n}
        \end{equation}
        for $n \ge 2$.
    \end{itemize}
\end{lemma}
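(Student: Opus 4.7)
The plan is to verify the two saturated classes agree by showing that each set is contained in the saturation of the other. Both arguments hinge on an explicit geometric analysis of $\ArDC{2}$ and its relationship to $\LinOrd{1} \boxproduct \LinOrd{1}$ and to the spine $\genSpine[2]{\ArDC{-}} \cong \ArDC{1} \cup_{\ArDC{0}} \ArDC{1}$.

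For the first direction, I will show that morphism \eqref{OFSwithRLP} is a retract of the spine inclusion \eqref{SegalwithRLP} at $n = 2$. The key is a section-retraction pair $j \colon \LinOrd{1} \boxproduct \LinOrd{1} \hookrightarrow \ArDC{2}$ and $q \colon \ArDC{2} \to \LinOrd{1} \boxproduct \LinOrd{1}$ with $q \circ j = \id$. The inclusion $j$ picks out the square of arrows $\{01, 02, 11, 12\}$ in $\Ar{\LinOrd{2}}$: in bidegree $(m, k)$ it sends a pair $(f \colon \LinOrd{m} \to \LinOrd{1}, g \colon \LinOrd{k} \to \LinOrd{1})$ to the join map $\LinOrd{k} \star \LinOrd{m} \to \LinOrd{2}$ given by $g$ on the first $k+1$ indices and by $f+1$ on the remaining $m+1$ indices. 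The retraction $q$ is induced by the map of factorization systems $\Ar{\LinOrd{2}} \to \LinOrd{1} \prodOFS \LinOrd{1}$ sending $(a, b) \mapsto (\max(b-1, 0), \min(a, 1))$, which visibly preserves both the egressive and ingressive classes. I then extend to a retract diagram in $\Ar{\DblCat}$ by constructing compatible maps between the L-shape $(\LinOrd{0} \boxproduct \LinOrd{1}) \cup_{\LinOrd{0} \boxproduct \LinOrd{0}} (\LinOrd{1} \boxproduct \LinOrd{0})$ and $\genSpine[2]{\ArDC{-}}$: the inclusion sends the L-shape onto the subobject spanned by $\{01, 11, 12\}$ in the spine, and the retraction collapses each $\ArDC{1}$-summand of the spine onto one of the two edges of the L-shape. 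All compatibility checks with $j$ and $q$ follow from the explicit formulas.

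For the second direction, I will show each spine inclusion \eqref{SegalwithRLP} lies in the saturation of \eqref{OFSwithRLP}. For $n = 2$ I verify the pushout identity $\ArDC{2} \cong \genSpine[2]{\ArDC{-}} \cup_{\text{L-shape}} (\LinOrd{1} \boxproduct \LinOrd{1})$ in $\DblCat$ by a direct bidegree-by-bidegree comparison: the object count $5 + 4 - 3 = 6$ matches $\binom{4}{2}$, and in higher bidegrees the presheaf-level pushout is missing exactly the cells of $\ArDC{2}$ involving the arrow $02$, for instance the non-identity horizontal $002 \colon 00 \to 02$; these are recovered as Segal composites when one reflects the presheaf pushout back into $\DblCat$, using that the horizontal arrows $001$ and $012$ (from the spine and the attached square, respectively) are composable over their shared source object $01$. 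Granting this identity, the $n = 2$ spine inclusion is a pushout of \eqref{OFSwithRLP} along the canonical map $\text{L-shape} \to \genSpine[2]{\ArDC{-}}$, and hence lies in the desired saturation. For $n \ge 3$ I will induct by attaching squares of type \eqref{OFSwithRLP} one at a time, in order of increasing $b - a$ for the "long" arrows $(a, b)$ with $b \ge a + 2$ in $\Ar{\LinOrd{n}}$; each attachment takes place along a valid bottom-left L-shape already present in the preceding intermediate object, namely the L-shape formed by a horizontal arrow ending at $(a, b-1)$ and a vertical arrow starting at $(a+1, b)$ composed through the intermediate object $(a, b-1)$-to-$(a+1, b)$ structure built in earlier stages.

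The main obstacle is the base case pushout identification in $\DblCat$, which requires careful bookkeeping of the Segal completion in order to verify that precisely the intended cells are produced in every bidegree after the reflection from bisimplicial presheaves. Once this is in hand, the rest of the proof is formal manipulation of pushouts, retracts, and an induction that is combinatorial but routine.
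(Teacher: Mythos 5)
Your first direction (exhibiting \eqref{OFSwithRLP} as a retract of the $n=2$ spine inclusion via the square on the arrows $01,02,11,12$ and the retraction $(a,b)\mapsto(\max(b-1,0),\min(a,1))$) is correct and in fact supplies details the paper leaves to the reader. Your combinatorial skeleton for the other direction — start from the spine and attach copies of $\LinOrd{1}\boxproduct\LinOrd{1}$ along bottom-left L-shapes, one new long arrow at a time — is also the right skeleton, and your ordering by increasing length $b-a$ works just as well as the paper's ordering (the paper filters by the target $i+1$ and, within that, by decreasing source $j$, via the subobjects $P^{i,j}$). Two caveats: the L-shape you name in the induction step (``a horizontal arrow ending at $(a,b-1)$ and a vertical arrow starting at $(a+1,b)$'') is garbled — those two arrows do not share a vertex; the correct attaching L consists of the vertical arrow $(a,b-1)\to(a+1,b-1)$ followed by the horizontal arrow $(a+1,b-1)\to(a+1,b)$ — and the cells missing from the presheaf pushout are those with some edge of length $\ge 2$ (e.g.\ $00\to02$), not all cells ``involving $02$''.

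The genuine gap is the step you yourself flag as the ``main obstacle'' and then defer: identifying the result of the attachments with $\ArDC{\LinOrd{n}}$ in $\DblCat$. Your plan is to check ``bidegree by bidegree'' that the missing cells ``are recovered as Segal composites when one reflects the presheaf pushout back into $\DblCat$'', but the reflection $\PSh(\Delta\times\Delta)\to\DblCat$ is a localization that is not computed pointwise, so there is no bidegree-by-bidegree bookkeeping that establishes this; and for $n\ge3$ you do not even say what the total presheaf-level object of your induction is or why its reflection is $\ArDC{\LinOrd{n}}$ — precisely the non-routine part. The paper's proof contains an extra idea that fills exactly this hole: all pushouts are performed in $\PSh(\Delta\times\Delta)$ (where they are pointwise pushouts of sets, so the attaching L-shapes are visibly present and the pushout squares can be checked by hand), and the iterated pushouts build not $\ArDC{\LinOrd{n}}$ but the sub-presheaf $P\subset\ArDC{\LinOrd{n}}$ of those cells all of whose edge-restrictions (in both join factors) increase by at most $1$. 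One then proves separately that $P\hookrightarrow\ArDC{\LinOrd{n}}$ is inverted by the localization to $\DblCat$, by factoring it through the intermediate subobject $Q$ (the ``length $\le 1$'' condition imposed in only one coordinate) and computing $P(-,m)$, $Q(-,m)$, $Q(k,-)$, $\ArDC{\LinOrd{n}}(k,-)$ levelwise as disjoint unions of spines and simplices, so that each of $P\to Q$ and $Q\to\ArDC{\LinOrd{n}}$ is a levelwise spine inclusion in one simplicial direction and hence becomes an equivalence after Segal localization; since the localization preserves colimits, the conclusion follows. Without this (or an equivalent) comparison mechanism, your base case and your induction both assert rather than prove the key identification, so the proposal as it stands does not close the argument.
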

\begin{proof}
    It is easy to see that the map \eqref{OFSwithRLP} is a retract of the map \eqref{SegalwithRLP} for $n = 2$. We leave the details to the reader, since we will not make use of this direction in the rest of the paper. 
    
    For the other direction, we will proceed as follows:
    \begin{itemize}
        \item We will define a certain bicosimplicial subspace $P \subset \ArDC{\LinOrd{n}}$ such that the inclusion is sent to an equivalence under the localization functor $\PSh(\Delta \times \Delta) \to \DblCat$. 
        \item We will show that $P$ can be obtained from the source of \eqref{SegalwithRLP} via iterated pushouts of \eqref{OFSwithRLP} in $\PSh(\Delta \times \Delta)$.
    \end{itemize}
    The claim follows because the localization $\PSh(\Delta \times \Delta) \to \DblCat$ preserves colimits. 
    
    Let $P \subset \ArDC{\LinOrd{n}}$ be the bicosimplicial subspace spanned by the elements \[ \ArDC{\LinOrd{n}}(k,m) = \map(\join{\LinOrd{m}}{\LinOrd{k}},\LinOrd{n}) \] for which every morphism in the image of the natural inclusions $\LinOrd{m} \to \join{\LinOrd{m}}{\LinOrd{k}}$ and $\LinOrd{k} \to \join{\LinOrd{m}}{\LinOrd{k}}$ is sent to a morphism in $\LinOrd{n}$ which increases by at most $1$. 
    Let $Q \subset \ArDC{\LinOrd{n}}$ be the bicosimplicial subspace spanned by all the elements where this holds just for the morphisms  in the image of the natural inclusions $\LinOrd{m} \to \join{\LinOrd{m}}{\LinOrd{k}}$.

    Let $\map^{\le 1}(\LinOrd{m},\LinOrd{n}) \subset \map(\LinOrd{m},\LinOrd{n})$ denote the subset of maps where each morphism in $\LinOrd{m}$ is sent to a morphism in $\LinOrd{n}$ which increases by at most $1$.
    
    Every map $\alpha \colon \LinOrd{m} \to \LinOrd{n}$ gives an element in \[ \DCLR(\Ar{\LinOrd{n}})(n-\alpha(m),m)=\map(\join{\LinOrd{m}}{\LinOrd{n-\alpha(m)}},\LinOrd{n}) \] whose restriction to $\LinOrd{m}$ is $\alpha$ and whose restriction to $\LinOrd{n-\alpha(m)}$ is the inclusion of the last $n-\alpha(m)+1$ elements.
    This induces equivalences
    \begin{align*}
        \ArDC{\LinOrd{n}}(-,m) &{}\cong \coprod_{\alpha \in \map(\LinOrd{m},\LinOrd{n})} \LinOrd{n-\alpha(m)}
        \intertext{and similarly} 
        \ArDC{\LinOrd{n}}(k,-) &{}\cong \coprod_{\alpha \in \map(\LinOrd{k},\LinOrd{n})} \LinOrd{\alpha(0)} \,,
        \intertext{inducing equivalences on simplicial subspaces}
        Q(-,m) &{}\cong \coprod_{\alpha \in \map^{\le 1}(\LinOrd{m},\LinOrd{n})} \LinOrd{n-\alpha(m)} \,, \\
        Q(k,-) &{}\cong \coprod_{\alpha \in \map(\LinOrd{k},\LinOrd{n})} \spine{\alpha(0)}
        \intertext{and}
        P(-,m) &{}\cong \coprod_{\alpha \in \map^{\le 1}(\LinOrd{m},\LinOrd{n})} \spine{n-\alpha(m)}
    \end{align*}

    From this description it follows that the inclusion $P \to Q$ exhibits the target as the localization of the source with respect to the reflective inclusion \[ \Fun(\Delta^{\op},\CatInfty) \longrightarrow \Fun(\Delta^{\op},\PSh(\Delta)) \cong \PSh(\Delta \times \Delta) \] and so is $Q \to \ArDC{ \LinOrd{n} }$ for the other coordinate.
    
    But both these inclusions are intermediate inclusions of $\DblCat \subset \PSh(\Delta \times \Delta)$, it follows that the inclusion $P \to \ArDC{\LinOrd{n}}$ is sent to an equivalence by the localization functor to double categories. 

    For $0 \le j \le i \le n-1$, let $P^{i,j}$ denote the bicosimplicial subspace of $P$ spanned by all objects $\alpha \colon \LinOrd{1} \to \LinOrd{n}$ of \[ P(0,0) = \ArDC{\LinOrd{n}}(0,0)= \map( \join{\LinOrd{0}}{\LinOrd{0}} ,\LinOrd{n}) \] such that $\alpha(1) - \alpha(0) \le 1$, or $\alpha(1) \le i$, or $\alpha(1)=i+1$ and $\alpha(0) \ge j$. We get a nested sequence 
    \[ P^{1,1} \subset P^{1,0}=P^{2,2} \subset P^{2,1} \subset P^{2,0}=P^{3,3} \subset \dots P^{n-1,0}=P \,, \] 
    see \Cref{figure} for an example. 
    Note that the inclusion $P^{1,1} \subset \ArDC{\LinOrd{n}}$ is equivalent to the spine inlusion $\genSpine[n]{\ArDC{-}} \longrightarrow \ArDC{n}$. This can be seen using that colimits in $\PSh(\Delta \times \Delta)$ are computed pointwise and the colimit used to define $\genSpine[n]{\ArDC{-}}$ gives a bicosimplicial space which already is a double category. 
    \begin{figure}[h] 
        \centering
        \begin{tikzcd}
            00 \ar[r] & 01 \ar[d] \ar[r] \ar[dr, phantom, "{P^{1,0}}"] & 02 \ar[r] \ar[d] \ar[dr, phantom, "{P^{2,0}}"] & 03 \ar[d]   \\
            & 11 \ar[r] & 12 \ar[d] \ar[r] \ar[dr, phantom, "{P^{2,1}}"] & 13 \ar[d] \\
            & & 22 \ar[r] & 23 \ar[d] \\
            & &  & 33 
        \end{tikzcd} 
        \caption{$\ArDC{\LinOrd{3}}$, the squares are labeled by the the smallest bicosimplicial subspace $P^{i,j}$ in which they are contained}
        \label{figure}
    \end{figure}
    
    Again using that pushouts in $\PSh(\Delta \times \Delta)$ are computed pointwise, one verifies that the following square is a pushout for all $1 \le j+1 \le i \le n-1$:
    \[ 
    \begin{tikzcd}
        \left(\LinOrd{0} \boxproduct \LinOrd{1} \right)\cup_{\LinOrd{0} \boxproduct \LinOrd{0}} \left(\LinOrd{1} \boxproduct \LinOrd{0}\right)  \ar[d,hook] \ar[r] & P^{i,j+1} \ar[d,hook] \\
        \LinOrd{1} \boxproduct \LinOrd{1} \ar[r] & P^{i,j}
    \end{tikzcd} \] 
    where the lower map is the map classifying the element in \[ P^{i,j}(1,1) \subset \ArDC{\LinOrd{n}}(1,1)  = \map(\join{\LinOrd{1}}{\LinOrd{1}},\LinOrd{n}) \] restricting to the inclusion of $\{j,j+1\}$ on the first copy of $\LinOrd{1}$ and the inclusion of $\{i,i+1\}$ in the second.
    In fact, in this case the diagram above is pointwise a diagram of sets. Moreover, an explicit combinatorial argument shows that pointwise all maps in the above square are mapped to inclusions of sets and the upper left corner is precisely the intersection of the two inclusion into the lower right corner. We leave the details to the reader.

    This finishes the proof of the claim that $P$ can be obtained from $P^{1,1}$ via iterated pushouts of \eqref{OFSwithRLP}. 
\end{proof}

\begin{proposition} \label{precornesOFS}
    The simplicial space $\precorners(\doublecat)$ is a complete Segal space if and only if $\doublecat$ is a factorization double category. 
\end{proposition}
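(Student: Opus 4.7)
The strategy is to verify the Segal and completeness conditions for $\precorners(\doublecat)$ separately, each arising as a biconditional step in a different way.

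\emph{Segal condition.} By the definition of $\precorners$, the $n$-th Segal condition for $\precorners(\doublecat)$ is equivalent to $\doublecat$ being local with respect to the spine inclusion \eqref{SegalwithRLP}. By \Cref{saturation-lemma}, the saturation of this family for $n \ge 2$ coincides with that of the single morphism \eqref{OFSwithRLP}, whose locality is precisely condition (1) of \Cref{equ-def-DblOF} — the factorization double category condition. Hence $\precorners(\doublecat)$ is a Segal space if and only if $\doublecat$ is a factorization double category; in particular, the forward direction of the proposition (complete Segal $\Rightarrow$ factorization double category) already follows.

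\emph{Completeness.} Conversely, assume $\doublecat$ is a factorization double category so that $\precorners(\doublecat)$ is a Segal space. I would verify completeness via \Cref{checkcompleteness}: show that the degeneracy $s_0 \colon \precorners(\doublecat)_0 \to \precorners(\doublecat)_1$ is a monomorphism of spaces with essential image the invertible 1-simplices. The first step is to identify $\precorners(\doublecat)_1$ with the corner space $\doublecat(1, 0) \times_{\doublecat(0, 0)} \doublecat(0, 1)$, using that in $\ArDC{\LinOrd{1}}$ the unique non-trivial square is the corner filler, which is automatic in any factorization double category; under this identification, $s_0$ corresponds to the identity-corner map $x \mapsto (\id_x, \id_x)$.

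Given an invertible 1-simplex $\sigma = (h, v) \in \precorners(\doublecat)_1$, a Segal-composition analysis via unique square fillers will reduce the invertibility of $\sigma$ to the separate invertibility of $h$ in the complete Segal space $\doublecat(-, 0)$ and of $v$ in the complete Segal space $\doublecat(0, -)$. Completeness of these two underlying Segal spaces of $\doublecat$ then forces $h$ and $v$ to be identities, and the middle-object-matching condition (target of $h$ equals source of $v$) forces them to be identities at a common object, placing $\sigma$ in the image of $s_0$. The monomorphism property of $s_0$ follows from the analogous statement applied to degenerate corners.

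The main obstacle will be the careful combinatorial analysis of Segal composition of corners in $\precorners(\doublecat)$ and the resulting splitting of invertibility into horizontal and vertical components. This analysis is made tractable precisely by the factorization double category condition, which allows any \enquote{wrong-order} vertical-then-horizontal composition that arises when composing two corners to be rewritten, via a unique square filler, as a \enquote{correct-order} horizontal-then-vertical corner.
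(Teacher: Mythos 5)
Your plan is correct and takes essentially the same route as the paper's proof: the Segal condition is reduced to the factorization condition via \Cref{saturation-lemma}, and completeness is checked with the criterion of \Cref{checkcompleteness} by identifying $\precorners(\doublecat)_1$ with the corner pullback $\doublecat(1,0)\times_{\doublecat(0,0)}\doublecat(0,1)$ and showing that an invertible corner has both components invertible, so that it lies in the image of the degeneracy by completeness of $\doublecat(-,0)$ and $\doublecat(0,-)$. The paper carries out your ``Segal-composition analysis'' by using the mapping-space characterization of equivalences to extract one-sided inverses for each component (and note that completeness makes $h$ and $v$ homotopic to degenerate edges rather than literal identities, which is all that is needed).
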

\begin{proof}
    A double category $\doublecat$ is a factorization category if it is local with respect to the morphism \eqref{OFSwithRLP}. Unwinding definitions, $\precorners(\doublecat)$ is a Segal space if and only if $\doublecat$ is local with respect to the morphisms \eqref{SegalwithRLP}. Therefore, the Segal claim follows from \Cref{saturation-lemma}. It is left to show that in this case the Segal space is automatically complete. We will use the criterion from \Cref{checkcompleteness}.
    
    First note that degeneracy map 
    \[ \precorners(\doublecat)_0 = \doublecat(0,0) \cong \doublecat(0,0) \times_{\doublecat(0,0)} \doublecat(0,0) \longrightarrow \doublecat(1,0) \times_{\doublecat(0,0)} \doublecat(0,1) = \precorners(\doublecat)_1 \] 
    is an inclusion of path components because it is a pullback of such.

    We need to show that every equivalence is contained in the essential image, which consists of the objects in $\doublecat(1,0) \times_{\doublecat(0,0)} \doublecat(0,1) = \precorners(\doublecat)_1$ which are equivalences in both components. 

    Let $c,d \in \doublecat(0,0) = \precorners(\doublecat)_0$ be two objects and \[ (f,g) \in \core{\left(\underslice{\doublecat(-,0)}{c}\right)} \times_{\doublecat(0,0)} \core{\left(\overslice{\doublecat(0,-)}{d} \right)}= \map_{\precorners(\doublecat)}(c,d) \,. \] be an equivalence.

    Then postcomposition with $(f,g)$ induces an equivalence \[ \map_{\precorners(\doublecat)}(d,c) \longrightarrow \map_{\precorners(\doublecat)}(d,d) \] Picking an inverse of the identity and unwinding composition in $\precorners(\doublecat)$, we see that $g$ has a left inverse. Similarly, one can show that it has a left inverse and so does $f$. 
\end{proof}
\begin{construction} \label{constr-corners}
    \Cref{precornesOFS} together with \Cref{rezk-thm} implies that we obtain a functor 
    \begin{align*}
        \precorners \colon & \DblCatOF \longrightarrow \CatInfty \,. 
        \intertext{We now refine this to a functor}
        \corners \colon&  \DblCatOF \longrightarrow \OFS \,, 
    \end{align*}
    which assigns to a factorization double category its \emph{category of corners}.

    The target functor 
    \[  \ArDC{\LinOrd{m}} \longrightarrow \LinOrd{m} \boxproduct \LinOrd{0} \] 
    gives rise to a natural functor \[ \doublecat(-,0) \longrightarrow \precorners(\doublecat) \] 
    which induces an equivalence on cores and an inclusion of path components 
    \[ \doublecat(1,0) \cong \doublecat(1,0) \times_{\doublecat(0,0)} \doublecat(0,0) \longrightarrow \doublecat(1,0) \times_{\doublecat(0,0)} \doublecat(0,1) = \core{\Ar{\precorners(\doublecat)}} \] 
    on cores of arrow categories.

    Similarly, we obtain a natural functor $\doublecat(0,-) \to \precorners(\doublecat)$. 
    The triple \[ \corners(\doublecat) = (\precorners(\doublecat),\doublecat(-,0),\doublecat(0,-)) \] forms a factorization system by \Cref{detectingOFS}.
\end{construction}

\begin{construction} \label{construction_unit}
    There are natural equivalences 
    \[ \map(\LinOrd{k},\LinOrd{m} \times \LinOrd{n}) \cong \map(\LinOrd{k},\LinOrd{m}) \times \map(\LinOrd{k},\LinOrd{n}) \cong \map(\LinOrd{k} \boxproduct \LinOrd{k},\LinOrd{m} \boxproduct \LinOrd{n})  \] 
    Precomposition with $(t,s) \colon \ArDC{\LinOrd{k}} \to \LinOrd{k} \boxproduct \LinOrd{k}$ from \Cref{defArDC} hence induces a natural functor
    \[ \LinOrd{m} \times \LinOrd{n} \longrightarrow \precorners(\LinOrd{m} \boxproduct \LinOrd{n}) \] 
    and this functor enhances to a functor of factorization systems
    \[ \LinOrd{m} \prodOFS \LinOrd{n} \longrightarrow \corners(\LinOrd{m} \boxproduct \LinOrd{n}) \,. \] 
    We hence obtain an element in 
    \[ \DCLR(\corners(\LinOrd{m} \boxproduct \LinOrd{n}))(m,n) = \map_{\DblCat}(\LinOrd{m} \boxproduct \LinOrd{n},\DCLR(\corners(\LinOrd{m} \boxproduct \LinOrd{n}))) \,. \] 
    Applying $\DCLR \circ \corners$ and precomposition with this morphism gives a natural map
    \begin{equation} \label{unitpointwise}
        \map_{\DblCat}(\LinOrd{m} \boxproduct \LinOrd{n},\doublecat) \longrightarrow \map_{\DblCat}(\LinOrd{m} \boxproduct \LinOrd{n},\DCLR(\corners(\doublecat))) 
    \end{equation}
    and hence a natural transformation \[ \id_{\DblCatOF} \Longrightarrow \DCLR \circ \corners \,. \]
\end{construction}

\begin{proposition} \label{unitisiso}
    The above natural transformation is a natural equivalence.
\end{proposition}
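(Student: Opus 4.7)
The plan is to reduce the claim to a pointwise check at a small finite set of bidegrees using the Segal conditions, and then to verify the equivalence explicitly at those bidegrees. Both $\doublecat$ and $\DCLR(\corners(\doublecat))$ are double categories, so their underlying bisimplicial spaces satisfy the Segal condition in each simplicial variable. Consequently, for each of these bisimplicial spaces the value at $(m,n)$ is an iterated fibre product of the values at $(m,n) \in \{0,1\}^2$ over the values at $(0,n)$ and $(m,0)$, which are themselves fibre products of corner values. Since \eqref{unitpointwise} is a natural transformation of bisimplicial spaces and fibre products of equivalences of spaces are equivalences, it suffices to show that \eqref{unitpointwise} is an equivalence for $(m,n) \in \{(0,0),(1,0),(0,1),(1,1)\}$.

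For the bidegrees $(0,0)$, $(1,0)$, $(0,1)$ I would argue essentially by unwinding \Cref{constr-corners}. The space of objects of $\corners(\doublecat)$ is by construction $\precorners(\doublecat)_0 = \doublecat(0,0)$. The space of egressive morphisms of $\corners(\doublecat)$ is the essential image of the inclusion of path components $\doublecat(1,0) \hookrightarrow \core{\Ar{\precorners(\doublecat)}}$ produced in \Cref{constr-corners}; dually for the ingressive morphisms. Tracing through \Cref{construction_unit}, the unit map at each of these bidegrees becomes precisely the identifications just described.

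The substantive case is $(1,1)$. The right hand side $\DCLR(\corners(\doublecat))(1,1)$ is the space of maps of factorization systems $\LinOrd{1} \prodOFS \LinOrd{1} \to \corners(\doublecat)$, i.e.\ commutative squares in $\precorners(\doublecat)$ whose horizontal arrows are egressive and whose vertical arrows are ingressive. Applying \Cref{detectingOFS} to the factorization system $\corners(\doublecat)$, exactly as in the proof of \Cref{lem-defDCLR}, identifies this with $\core{\Ar{\catL}} \times_{\core{\precorners(\doublecat)}} \core{\Ar{\catR}}$, which by the previous step is identified with $\doublecat(1,0) \times_{\doublecat(0,0)} \doublecat(0,1)$. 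Under these identifications the unit map at $(1,1)$ becomes the corner map $\doublecat(1,1) \to \doublecat(1,0) \times_{\doublecat(0,0)} \doublecat(0,1)$, which is an equivalence by condition (1) of \Cref{equ-def-DblOF}; this is exactly the defining property of a factorization double category.

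The main obstacle will be bookkeeping: verifying that the abstract unit of \Cref{construction_unit}, which is defined via composition with a universal element in $\DCLR(\corners(\LinOrd{m} \boxproduct \LinOrd{n}))(m,n)$, really agrees with the explicit identifications and corner map at each of the four bidegrees. This amounts to chasing the target/source functor $(t,s)\colon \ArDC{\LinOrd{k}} \to \LinOrd{k} \boxproduct \LinOrd{k}$ from \Cref{defArDC} through $\DCLR \circ \corners$ in low dimensions, and is a routine if slightly intricate unwinding of definitions.
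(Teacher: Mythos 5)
Your proposal is correct and follows essentially the same route as the paper: reduce via the Segal conditions to bidegrees $m,n\le 1$, dispose of the case $m=n=1$ using that both sides are factorization double categories (which is exactly your corner-map argument via condition (1) of \Cref{equ-def-DblOF}), and verify the remaining cases $(0,0)$, $(1,0)$, $(0,1)$ by unwinding \Cref{constr-corners} and \Cref{construction_unit}. The only cosmetic difference is that the paper simply excludes $(1,1)$ in one line, whereas you spell that case out explicitly.
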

\begin{proof}
    We need to prove that \eqref{unitpointwise} is an equivalence for all $m,n$ and $\doublecat$. By the Segal condition, it is enough to check this for $m,n \le 1$. Because both sides are factorization double categories, we can also exclude the case $m=n=1$. In the remaining three cases, one verifies that the map is an equivalence by unwinding the definitions, e.g. in the case $m=1, n=0$ the map identifies with the natural map from $\doublecat(1,0)$ into the core of the $ \Ar{\corners(\doublecat)_{\mathrm{eg}}}$, which was defined to be $\doublecat(1,0)$.
    \end{proof}
\begin{construction} \label{construction-unit}
    Let $\sOFS$ be a factorization system on a category $\cat$. 
    There is a natural map 
    \begin{align*} \label{universal-prop-Ar} \map_{\OFS}\left(\Ar{\LinOrd{n}},\sOFS\right) \longrightarrow \map_{\CatInfty}\left(\LinOrd{n},\cat\right) \end{align*}
    given by passing to the functor of underlying categories and then embedding $\LinOrd{n}$ into $\Ar{\LinOrd{n}}$ by sending each element to the identity arrow. 
    It follows from \cite[Prop. 4.8]{hhln23a} that this map is an equivalence. 
    The result is stated for orthogonal adequate triples but the proof makes no essential use of the adequate triple property. 
    
    Applying the functor $\DCLR$ yields a map 
    \[  \map_{\OFS}\left(\Ar{\LinOrd{n}},\sOFS\right) \longrightarrow \map_{\DblCat}\left(\ArDC{\LinOrd{n}},\DCLR\left(\sOFS\right)\right) \,. \] 
    Combining these two map yields a natural functor
    \[ \mathcal C \longrightarrow \precorners\left(\DCLR\left(\sOFS\right)\right) \,. \]
\end{construction}
\begin{proposition}\label{counitisiso}
    The above natural transformation refines to a natural transformation of automorphisms of the category of factorization systems \[ \id_{\OFS} \Longrightarrow \corners \circ \DCLR \] and this natural transformation is a natural equivalence.
\end{proposition}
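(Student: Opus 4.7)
The plan is to prove separately the two claims of the proposition: (i) that the transformation of underlying categories from \Cref{construction-unit} refines to a transformation of factorization systems, and (ii) that this refined transformation is a pointwise equivalence. Both points proceed by direct verification.

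For (i), recall $\OFS \subset \Fun(\Lambda^2_2, \CatInfty)$, so specifying a morphism of factorization systems amounts to specifying an underlying functor together with compatible restrictions to the egressive and ingressive subcategories. The egressive subcategory of $\corners(\DCLR(\sOFS))$ is the simplicial space $\DCLR(\sOFS)(-, 0)$, and since $\LinOrd{n} \prodOFS \LinOrd{0}$ has every morphism egressive,
\[
    \DCLR(\sOFS)(n, 0) = \map_{\OFS}(\LinOrd{n} \prodOFS \LinOrd{0}, \sOFS) \simeq \map_{\CatInfty}(\LinOrd{n}, \catL) \,,
\]
identifying this egressive subcategory with $\catL$. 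An egressive $n$-simplex $\LinOrd{n} \to \catL$ corresponds under \cite[Prop.~4.8]{hhln23a} to a functor of factorization systems $\Ar{\LinOrd{n}} \to \sOFS$ whose ingressive part is pointwise an identity; applying $\DCLR$ therefore yields a map that factors through $\LinOrd{n} \boxproduct \LinOrd{0}$, i.e., an element of $\DCLR(\sOFS)(n, 0)$, so the constructed functor indeed restricts as required. Tracing the identifications, the restricted map $\catL \to \catL$ is the identity, and the analogous argument handles the ingressive subcategory.

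For (ii), by the previous paragraph it suffices to show the underlying functor $\cat \to \precorners(\DCLR(\sOFS))$ is an equivalence. Both sides are complete Segal spaces, the target by \Cref{precornesOFS}, so by \Cref{rezk-thm} it suffices to check the underlying simplicial map is a levelwise equivalence. At $n = 0$ both spaces are $\core{\cat}$ and the map is the identity. At $n = 1$, using the identification already employed in the proof of \Cref{precornesOFS},
\[
    \precorners(\DCLR(\sOFS))_1 \cong \DCLR(\sOFS)(1, 0) \times_{\DCLR(\sOFS)(0, 0)} \DCLR(\sOFS)(0, 1) \cong \core{\Ar{\catL}} \times_{\core{\cat}} \core{\Ar{\catR}} \,,
\]
which is equivalent to $\core{\Ar{\cat}} = \cat_1$ by \Cref{detectingOFS}. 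For $n \ge 2$, the Segal condition on both complete Segal spaces reduces the claim to these two cases.

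The main technical point I expect to be delicate is verifying the compatibility of the level-one map with the above identification. Concretely, one must confirm that under \cite[Prop.~4.8]{hhln23a}, an arrow $f \colon x \to y$ in $\cat$ corresponds to the map $\Ar{\LinOrd{1}} \to \sOFS$ classifying its unique eg-in factorization $(f_{\mathrm{eg}}, f_{\mathrm{in}})$, and that applying $\DCLR$ then precisely records this pair in the pullback $\core{\Ar{\catL}} \times_{\core{\cat}} \core{\Ar{\catR}}$. Once verified, the level-one map becomes exactly the inverse of the composition equivalence of \Cref{detectingOFS}, and is therefore an equivalence.
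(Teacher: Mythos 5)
Your proposal is correct, and it differs from the paper's proof mainly in how the equivalence is concluded. You reduce to a levelwise comparison of complete Segal spaces via \Cref{rezk-thm}, identify $\precorners(\DCLR(\sOFS))_1$ with $\core{\Ar{\catL}}\times_{\core{\cat}}\core{\Ar{\catR}}$, and then need to know that the level-one map is precisely the factorization map, inverse to the composition equivalence of \Cref{detectingOFS} --- exactly the verification you flag. The paper never computes the unit on a general arrow: its commutative square with the two maps $t^\ast$ identifies the unit only on egressive (resp.\ ingressive) arrows, where it is essentially the identity; since the unit is a functor it commutes with composition, so applying \Cref{detectingOFS} to \emph{both} $\sOFS$ and $\corners(\DCLR(\sOFS))$ transports the equivalence on $\core{\Ar{\catL}}\times_{\core{\cat}}\core{\Ar{\catR}}$ to an equivalence on $\core{\Ar{-}}$, which together with the equivalence on cores finishes the argument. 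What your route buys is a concrete description of the unit at level one; what the paper's route buys is that the delicate unwinding is avoided by pure functoriality. Your flagged step is true and no harder than the unwinding the paper itself performs: by the uniqueness in \cite[Prop.~4.8]{hhln23a}, the inverse equivalence sends an arrow $f$ to the functor $\Ar{\LinOrd{1}}\to\sOFS$ classifying its factorization (its restriction along the identity section is $f_{\mathrm{in}}\circ f_{\mathrm{eg}}\simeq f$), and since the identification of $\precorners(\doublecat)_1$ with the pullback is given by restriction along the two legs $\LinOrd{1}\boxproduct\LinOrd{0},\,\LinOrd{0}\boxproduct\LinOrd{1}\to\ArDC{\LinOrd{1}}$, the resulting element is exactly the pair $\left(\DCLR(f_{\mathrm{eg}}),\DCLR(f_{\mathrm{in}})\right)$. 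With that spelled out (and noting that for part (i) only the case $n\le 1$ is needed, since preservation of the two subcategories is detected on morphisms), your plan is a complete proof.
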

\begin{proof}
    Consider the following commutative diagram 
    \[ \begin{tikzcd}
        \map_{\OFS}\left((\LinOrd{1},\LinOrd{1},\core{\LinOrd{1}}),\sOFS\right) \ar[r,"\DCLR"] \ar[d,"{t^\ast}"] & \map_{\DblCat}\left(\LinOrd{1} \boxproduct \LinOrd{0},\DCLR\left(\sOFS\right)\right) \ar[d,"{t^\ast}"] \\
        \map_{\OFS}\left(\Ar{\LinOrd{1}},\sOFS\right) \ar[r,"\DCLR"] & \map_{\DblCat}\left(\ArDC{\LinOrd{1}},\DCLR\left(\sOFS\right)\right)
    \end{tikzcd} \,. \]
    The bottom arrow is the one used \Cref{construction-unit} to define the functor on cores of arrow categories $\core{\Ar{\sOFS}} \to \core{\Ar{\corners(\DCLR(\sOFS))}}$, and the vertical arrows induce the inclusion of the class of egressive morphisms in $\sOFS$ and $\corners(\DCLR(\sOFS))$, respectively. The top arrow is an equivalence, it can be identified with the identity functor on the subspace of $\map_{\CatInfty}(\LinOrd{1},\cat)$ spanned by the functors sending the morphism to an egressive morphism. 

    This shows that the functor in question preserves the class of egressive morphisms. 
    Analogously, one proves that it preserves ingressive morphisms. The argument actually shows that it induces an equivalence on the space of the egressive (or ingressive, respectively) morphisms. 
    Moreover, it follows from the definitions that it induces an equivalences on cores. 
    It follows from \Cref{detectingOFS} that the functor is an equivalence of factorization systems. 
\end{proof}
We can now deduce the main theorem:
\begin{theorem} \label{main-theorem}
    The functor $\DCLR$ induces an equivalence of categories \[ \OFS \cong \DblCatOF \]
\end{theorem}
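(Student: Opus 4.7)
The plan is to observe that the theorem is essentially an immediate consequence of the work done in the section. All of the substantive content has already been packaged into the preceding constructions and propositions; what remains is only to assemble these pieces into a formal statement of the equivalence.

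First, I would recall that by \Cref{def-DCLR} together with \Cref{lem-defDCLR}, the functor $\DCLR$ factors through the full subcategory $\DblCatOF \subset \DblCat$, so it makes sense to regard it as a functor $\DCLR \colon \OFS \to \DblCatOF$. Dually, \Cref{constr-corners} provides a functor $\corners \colon \DblCatOF \to \OFS$ in the reverse direction. Thus we have a candidate pair of mutually inverse functors, and it remains to exhibit natural equivalences between the two composites and the respective identity functors.

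These natural equivalences are precisely the content of \Cref{unitisiso} and \Cref{counitisiso}. Specifically, \Cref{construction_unit} constructs a natural transformation $\id_{\DblCatOF} \Rightarrow \DCLR \circ \corners$, and \Cref{unitisiso} verifies that it is an equivalence; similarly, \Cref{construction-unit} produces a natural transformation $\id_{\OFS} \Rightarrow \corners \circ \DCLR$, and \Cref{counitisiso} verifies that it is an equivalence. Combining these two statements, the functors $\DCLR$ and $\corners$ are mutually quasi-inverse, which establishes the theorem.

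The only delicate point is ensuring that these two natural equivalences are stated at the level of functors (not just pointwise equivalences between particular objects), but this has already been arranged during their construction. Consequently the proof of \Cref{main-theorem} amounts to a single sentence citing \Cref{unitisiso} and \Cref{counitisiso}; the genuine work lies upstream, particularly in \Cref{saturation-lemma} and \Cref{precornesOFS}, where the compatibility between the factorization axiom and the completeness/Segal conditions is verified.
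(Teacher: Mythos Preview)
Your proposal is correct and matches the paper's own proof exactly: the paper simply writes ``This follows from \Cref{unitisiso} and \Cref{counitisiso},'' which is precisely the assembly you describe.
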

\begin{proof}
    This follows from \Cref{unitisiso} and \Cref{counitisiso}. 
\end{proof}
This is an $\infty$-categorical
analog of \cite[Theorem 3.7]{ms23}.
\section{Fibrations} \label{section-fibrations}
In this section, we will first recall the definition of \emph{op-Gray fibrations} (and \emph{curved orthofibrations}) of factorization systems as well as \emph{\cartright s} (and \emph{\cocartright s}) of double categories. 
We will then verify in \Cref{comparefibrations} that the two notions agree under the equivalence between factorization systems and factorization double categories from the previous section.
In \Cref{maintheorem}, we will prove an (un)straightening equivalence for those fibrations.

\begin{definition}[{\cite[Def. 5.14]{hhln23a}}]
    A functor $F \colon \sOFSS \to \sOFS$ of factorization systems is called an \emph{ingressive cartesian fibration} if ingressive morphisms admit $F$-cartesian lifts and those lifts prescisely make up the subcategory of ingressive morphisms in $\sOFSS$. 

    A functor $F \colon \sOFSS \to \sOFS$ of factorization systems is called 
    \begin{itemize}
        \item a \emph{curved orthofibration} if it is an ingressive cartesian fibration and $F_{\mathrm{eg}} \colon \catSL \to \catL$ is a cocartesian fibration and
        \item an \emph{op-Gray fibration} if it is an ingressive cartesian fibration and $F_{\mathrm{eg}} \colon \catSL \to \catL$ is a cartesian fibration.
    \end{itemize}
    We write $\Ortho(\sOFS)$ for the non-full subcategory of the large category $\OFS_{/\sOFS}$ spanned by the curved orthofibrations over $\sOFS$ and functors over $\sOFS$ preserving cocartesian egressive lifts. Similarly, we define the large category $\opGray(\sOFS)$ of curved op-Gray fibrations. 

    As the category of orthogonal factorization systems $\OFS$ has pullbacks which commute with the forgetful functor to $\CatInfty$ by \cite[Prop. 5.2]{ch21} and both types of fibrations are preserved by pullbacks, \cite[Obs. 5.2(3)]{hhln23a}, we obtain functors 
    \[ \opGray,\, \Ortho \colon \OFS^{\op} \longrightarrow \CatInfty \,. \]
\end{definition}
\begin{lemma} \label{equivalent-condition-ing-cart}
    A functor $F \colon \sOFSS \to \sOFS$ is an ingressive cartesian fibration if and only if $F_{\mathrm{in}} \colon \catSR \to \catR$ is a right fibration.  
\end{lemma}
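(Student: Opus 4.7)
The plan is to use the characterization of right fibrations in \Cref{rightfibrations} (it suffices to check the cartesian square at level $n=1$) together with the essentially unique factorization of morphisms guaranteed by \Cref{detectingOFS}.

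For the forward direction, assume $F$ is an ingressive cartesian fibration. By definition, ingressive morphisms in $\sOFSS$ are precisely the $F$-cartesian lifts of ingressive morphisms in $\sOFS$. Existence and essential uniqueness of cartesian lifts then exhibits
\[ (\catSR)_1 \longrightarrow (\catR)_1 \times_{(\catR)_0} (\catSR)_0 \]
as an equivalence: an element of the right hand side is a pair $(g \colon c \to c', \tilde c')$ with $g$ ingressive and $F \tilde c' = c'$, and such a pair admits an essentially unique $F$-cartesian lift $\tilde g \colon \tilde c \to \tilde c'$, which is ingressive by hypothesis. By \Cref{rightfibrations}, $F_{\mathrm{in}}$ is a right fibration.

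For the backward direction, assume $F_{\mathrm{in}}$ is a right fibration. The right fibration property produces, for each ingressive $g \colon c \to c'$ and each lift $\tilde c'$ of $c'$, an essentially unique ingressive morphism $\tilde g \colon \tilde c \to \tilde c'$ lifting $g$. The crux is to upgrade $\tilde g$ from being $F_{\mathrm{in}}$-cartesian to being $F$-cartesian in the full category $\sOFSS$, i.e.\ to show that for every $\tilde d \in \sOFSS$ with image $d$, the map
\[ \map_{\sOFSS}(\tilde d, \tilde c) \longrightarrow \map_{\sOFSS}(\tilde d, \tilde c') \times_{\map_{\sOFS}(d, c')} \map_{\sOFS}(d, c) \]
is an equivalence. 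My plan is to factor any $\tilde h \colon \tilde d \to \tilde c'$ essentially uniquely as an egressive $\tilde e \colon \tilde d \to \tilde y$ followed by an ingressive $\tilde i \colon \tilde y \to \tilde c'$, and to exploit that $F$ preserves this factorization so that the intermediate object downstairs is forced to be $F \tilde y$. Applied to the ingressive morphism $i \colon F\tilde y \to c$ appearing in the downstairs factorization of the prescribed lift of $F\tilde h$, the right fibration property (in the form that every morphism of $\catSR$ is $F_{\mathrm{in}}$-cartesian) then yields an essentially unique ingressive $\tilde i_0 \colon \tilde y \to \tilde c$ with $\tilde g \tilde i_0 = \tilde i$ and $F \tilde i_0 = i$. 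Setting $\tilde h' = \tilde i_0 \circ \tilde e$ gives the required lift, and essential uniqueness of $\tilde h'$ follows by combining essential uniqueness of the factorization in $\sOFSS$ with essential uniqueness of $\tilde i_0$.

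Once this is established, the two remaining clauses in the definition of ingressive cartesian fibration are immediate: every ingressive morphism in $\sOFSS$ agrees, up to essential uniqueness, with the ingressive lift of its image and is therefore $F$-cartesian; conversely any $F$-cartesian lift of an ingressive morphism agrees up to contractible choice with this ingressive lift and is hence ingressive. I expect the main obstacle to be precisely the bridging step in the backward direction: reducing $F$-cartesianness (a universal property tested against arbitrary morphisms in $\sOFSS$) to $F_{\mathrm{in}}$-cartesianness (the analogous statement tested only against ingressive morphisms). Making this rigorous requires careful bookkeeping of the essentially unique factorizations in both $\sOFS$ and $\sOFSS$ together with the compatibility provided by $F$ being a functor of factorization systems.
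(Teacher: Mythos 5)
Your argument is correct in substance, but it takes a different route from the paper, which disposes of the lemma by citation: the paper quotes \cite[Obs. 5.2(2)]{hhln23a} for the statement that an ingressive morphism of $\sOFSS$ is $F$-cartesian if and only if it is $F_{\mathrm{in}}$-cartesian, and then simply observes that an ingressive cartesian fibration is therefore the same thing as a functor whose restriction $F_{\mathrm{in}}$ is a cartesian fibration all of whose morphisms are cartesian, i.e.\ a right fibration. What you do instead is re-prove that cited observation from scratch. Your forward direction (the homotopy fiber of $(\catSR)_1 \to (\catR)_1 \times_{(\catR)_0} (\catSR)_0$ over $(g,\tilde c')$ is the space of $F$-cartesian lifts of $g$ with target $\tilde c'$, which is contractible, plus \Cref{rightfibrations} to reduce to simplicial level $1$) is fine, and the gain of your route is self-containedness; the paper's gain is brevity, since the bridging step you identify as the crux is exactly the content of the external observation.

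That bridging step is also where your write-up still needs upgrading: as written you produce, for each point of the target of
\[ \map_{\catS}(\tilde d, \tilde c) \longrightarrow \map_{\catS}(\tilde d, \tilde c') \times_{\map_{\cat}(d,c')} \map_{\cat}(d,c)\,, \]
a lift together with a pointwise uniqueness statement, which gives bijectivity on components but not yet contractibility of the homotopy fibers, which is what $F$-cartesianness requires. The argument does close along the lines you indicate: use \Cref{detectingOFS} (in the form that the space of factorizations of a fixed morphism into an egressive followed by an ingressive is contractible) to write both $\map_{\catS}(\tilde d,\tilde c)$ and $\map_{\catS}(\tilde d,\tilde c')$, and likewise the two mapping spaces downstairs, as total spaces fibered over the space of egressive morphisms out of $\tilde d$ (respectively out of $d$); note that the comparison map lives over these spaces, since post-composition with the ingressive $\tilde g$ does not change the egressive part and the compatibility $F\tilde h \simeq g\circ k$ forces the egressive part of $k$ to agree with $F$ applied to the egressive part of $\tilde h$; and then check fiberwise over a fixed egressive part $\tilde e \colon \tilde d \to \tilde y$ that the map in question is precisely the $F_{\mathrm{in}}$-cartesianness condition for $\tilde g$ tested at $\tilde y$, which holds because every morphism in the source of a right fibration is cartesian. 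With that fibered reformulation your proof is complete, and it is in effect a proof of the observation the paper chooses to import.
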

\begin{proof}
    It is observed in \cite[Obs. 5.2(2)]{hhln23a} that an ingressive morphism in $\catS$ is $F$-cartesian if and only if it is $F_{\mathrm{in}}$-cartesian (while not making any use of the additional assumption that the factorization systems are adequate). It follows that $F$ is an ingressive cartesian fibration if and only if $F_{\mathrm{in}} \colon \catSR \to \catR$ is a cartesian fibration for which every morphisms in the source is $F_{\mathrm{in}}$-cartesian. But that is equivalent to $F_{\mathrm{in}}$ being a right fibration. 
\end{proof}
\begin{definition}
    A functor $F \colon \doublecatS \to \doublecat$ of double categories is called a \emph{\cocartright} if $F(-,0)$ is a cocartesian fibration and $F(n,-)$ is a right fibration for all $n \ge 0$.

    Let $\CocartRight{\doublecat}$ denote the non-full subcategory of the large category $\overslice{\DblCat}{\doublecat}$ spanned by the \cocartright s and functors preserving cocartesian edges.
    We analogously define the category $\CartRight{\doublecat}$ of \emph{\cartright s} over $\doublecat$. 
    
    Cocartesian fibrations, functors preserving cocartesian edges and right fibrations are preserved by pullbacks. Therefore,
    pullbacks also preserve \cocartright s between double categories and functors between them, we therefore obtain functors
    \[ \operatorname{CoR}, \, \operatorname{CaR} \colon \DblCat^{\op} \longrightarrow \CatInfty \,. \]
\end{definition}
\begin{lemma} \label{source-of-cartright}
    Let $F \colon \doublecatS \to \doublecat$ be a functor of double categories such that $F(0,-)$ is a right fibration. Then $F(n,-)$ is a right fibration for all $n \ge 0$ if and only if $\doublecatS$ is a factorization double category. 
\end{lemma}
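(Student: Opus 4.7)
The plan is to reformulate both sides of the claimed equivalence as cartesian-square conditions using \Cref{rightfibrations} and the final item of \Cref{equ-def-DblOF}, and then to compare them via pasting of pullback squares inside a $2 \times 2 \times 2$ commutative cube.

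Concretely, by \Cref{rightfibrations}, the functor $F(n,-)$ is a right fibration if and only if the square
\[
\begin{tikzcd}
\doublecatS(n,1) \ar[r] \ar[d,"\doublecatS(\id,d_0)"'] & \doublecat(n,1) \ar[d,"\doublecat(\id,d_0)"] \\
\doublecatS(n,0) \ar[r] & \doublecat(n,0)
\end{tikzcd}
\]
is cartesian, while by the last item of \Cref{equ-def-DblOF}, $\doublecatS$ is a factorization double category if and only if the square
\[
\begin{tikzcd}
\doublecatS(n,1) \ar[r,"\doublecatS(\id,d_0)"] \ar[d,"\doublecatS(d_n,\id)"'] & \doublecatS(n,0) \ar[d,"\doublecatS(d_n,\id)"] \\
\doublecatS(0,1) \ar[r,"\doublecatS(\id,d_0)"] & \doublecatS(0,0)
\end{tikzcd}
\]
is cartesian for every $n \ge 0$. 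A Segal-condition argument reduces both conditions to the case $n=1$: the map $F(n,-)$ decomposes as an iterated fiber product of copies of $F(1,-)$ over $F(0,-)$, and right fibrations of Segal spaces are closed under pullback and composition, so the $n \ge 2$ cases follow once $n \in \{0,1\}$ is handled. The factorization condition admits an analogous Segal reduction.

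At $n=1$, these two squares together with the assumed cartesian square for $F(0,-)$ sit as three faces of a commutative cube with vertices $\doublecatS(n,k)$ and $\doublecat(n,k)$ for $n,k \in \{0,1\}$, whose three axes are $F$, the face map $d_1$ in the first bisimplicial coordinate, and $d_0$ in the second. The desired equivalence then follows from the pasting lemma: pasting the $\doublecatS$-factorization face vertically with the $F(0,-)$-face, and pasting the $F(1,-)$-face vertically with the corresponding face in $\doublecat$, produce the same rectangle going from $\doublecatS(1,1)$ down to $\doublecat(0,0)$, so cartesianness of one of these faces is equivalent to cartesianness of the other.

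The main obstacle is to lay out the cube carefully and to identify the correct pasting diagram, paying close attention to the orientation of the various face maps; beyond that, the argument is a formal consequence of \Cref{rightfibrations}, \Cref{equ-def-DblOF}, and stability of pullback squares under pasting. Note that the pasting identity also implicitly uses that the analogous factorization square in $\doublecat$ is cartesian, which in the intended applications (most notably $\doublecat = \DCLR(\sOFS)$, which is factorization by \Cref{lem-defDCLR}) is automatic.
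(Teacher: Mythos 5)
Your argument is correct in outline and is essentially the paper's proof in levelwise clothing: the paper considers the single commutative square of simplicial spaces with horizontal maps $F(n,-)$, $F(0,-)$ and vertical maps $\doublecatS(d_n,-)$, $\doublecat(d_n,-)$, and concludes by closure of right fibrations under composition plus left cancellation, using characterization \ref{right-all-n} of \Cref{equ-def-DblOF}; that cancellation step is exactly your pasting of pullback squares, but carried out for all $n$ at once, so no reduction to $n=1$ is needed. One step of yours is not justified as stated: ``right fibrations are closed under pullback and composition'' does not by itself show that $F(n,-)$, viewed as the induced map on iterated fiber products, is a right fibration --- the usual pullback/composition factorization leaves you needing that $\doublecatS(1,-)\to\doublecat(1,-)\times_{\doublecat(0,-)}\doublecatS(0,-)$ is a right fibration, which is not given. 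You can repair this either by noting that the right-fibration condition is a cartesian-square condition (\Cref{rightfibrations}) and limits of cartesian squares are cartesian (equivalently, right fibrations are closed under limits in the arrow category), or, more simply, by dropping the reduction and running your cube argument with $\LinOrd{n}$ in the first coordinate for every $n$, replacing condition (1) of \Cref{equ-def-DblOF} by condition \ref{right-all-n}; the latter is in effect the paper's argument.

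On your final remark: you are right that the pasting needs the factorization square of $\doublecat$ to be cartesian, i.e. that $\doublecat$ is itself a factorization double category, a hypothesis absent from the statement. This is not a defect of your proof relative to the paper: the paper's own proof uses the same fact (it needs $\doublecat(d_n,-)$ to be a right fibration), the hypothesis holds in every application of the lemma, and the statement is genuinely false without it --- take $F$ to be the identity of a double category that is not a factorization double category; then every $F(n,-)$ is a right fibration but the source is not factorization. So flagging this explicitly, as you did, is the right call.
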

\begin{proof}
    We use characterization \ref{right-all-n} from \Cref{equ-def-DblOF} for factorization double categories. 
    Consider the following commutative diagram: 
    \[ \begin{tikzcd}[sep=huge]
        \doublecatS(n,-) \ar[d,swap,"{\doublecatS(d_n,-)}"] \ar[r,"{F(n,-)}"] & \doublecat(n,-) \ar[d,"{\doublecat(d_n,-)}"] \\
        \doublecatS(0,-) \ar[r,swap,"{F(0,-)}"] & \doublecat(0,-)
    \end{tikzcd} \]
    It follows from our assumption that the bottom functor and the functor on the right are right fibrations. 
    Using left cancellation of right fibrations, we deduce that the upper functor is a right fibration for all $n \ge 0$ if and only if the functor on the left is a right fibration for all $n \ge 0$.
\end{proof}
\begin{proposition} \label{comparefibrations}
    Let $\sOFS$ be a factorization system. The functor $\DCLR \colon \OFS \to \DblCat$ induces equivalences 
    \begin{align*}
        \Ortho(\sOFS) &{} \cong \CocartRight{\DCLR\left(\sOFS\right)}
        \intertext{and} 
        \opGray(\sOFS) &{}\cong \CartRight{\DCLR\left(\sOFS\right)}  \,.
    \end{align*}
\end{proposition}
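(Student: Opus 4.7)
The plan is to translate both conditions componentwise via $\DCLR$ and then invoke \Cref{equivalent-condition-ing-cart} and \Cref{source-of-cartright} together with the equivalence $\DCLR \colon \OFS \simeq \DblCatOF$ from \Cref{main-theorem}. I treat the \cocartright\ case; the \cartright\ / op-Gray case is entirely analogous, with cartesian replacing cocartesian on the egressive side. The key observation, read off directly from \Cref{def-DCLR}, is that for any functor $F \colon \sOFSS \to \sOFS$ of factorization systems the simplicial spaces $\DCLR(F)(-,0)$ and $\DCLR(F)(0,-)$ are (the complete Segal nerves of) $F_{\mathrm{eg}} \colon \catSL \to \catL$ and $F_{\mathrm{in}} \colon \catSR \to \catR$, respectively; this follows from the universal property of $\LinOrd{n} \prodOFS \LinOrd{0}$ and $\LinOrd{0} \prodOFS \LinOrd{n}$ in $\OFS$.

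Using this identification, saying $\DCLR(F)$ is a \cocartright\ unpacks to: $F_{\mathrm{eg}}$ is a cocartesian fibration and $\DCLR(F)(n,-)$ is a right fibration for all $n \ge 0$. Since $\DCLR(\sOFSS)$ is a factorization double category by \Cref{lem-defDCLR}, \Cref{source-of-cartright} replaces the second condition by the simpler requirement that $F_{\mathrm{in}}$ be a right fibration, which by \Cref{equivalent-condition-ing-cart} is equivalent to $F$ being an ingressive cartesian fibration. Hence $\DCLR(F)$ is a \cocartright\ if and only if $F$ is a curved orthofibration. Conversely, for any \cocartright\ $G \colon \doublecatS \to \DCLR(\sOFS)$, the right-fibration conditions on $G(n,-)$ for all $n \ge 0$, combined with $G(0,-)$ being a right fibration, force $\doublecatS$ to be a factorization double category by the other direction of \Cref{source-of-cartright}, so \Cref{main-theorem} yields $G \simeq \DCLR(F)$ for an essentially unique $F$, which is necessarily a curved orthofibration.

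The non-full subcategory structures match because, under the identification $\DCLR(F)(-,0) \simeq F_{\mathrm{eg}}$, cocartesian edges in the double-category sense are precisely the $F_{\mathrm{eg}}$-cocartesian egressive lifts on the factorization-system side, so the two preservation conditions coincide; full faithfulness on the underlying mapping spaces is inherited from \Cref{main-theorem}. The main potential obstacle is purely a book-keeping matter, namely verifying the identification of $\DCLR(F)(-,0)$ and $\DCLR(F)(0,-)$ with the nerves of $F_{\mathrm{eg}}$ and $F_{\mathrm{in}}$ from the universal properties of the relevant product factorization systems. Once this identification is in place, the comparison is a direct assembly of the three lemmas cited above.
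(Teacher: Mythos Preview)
Your proof is correct and follows essentially the same route as the paper's: both arguments hinge on the identification of $\DCLR(F)(-,0)$ and $\DCLR(F)(0,-)$ with $F_{\mathrm{eg}}$ and $F_{\mathrm{in}}$, then combine \Cref{equivalent-condition-ing-cart} and \Cref{source-of-cartright} with the full faithfulness from \Cref{main-theorem}, and finally use \Cref{source-of-cartright} in the other direction for essential surjectivity. The only difference is presentational ordering; the content is the same.
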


\begin{proof}
    We discuss the first equivalence, the other one can be proven analogously. 

    It follows from \Cref{main-theorem}, that $\DCLR$ induces a fully faithful inclusion \[ \overslice{\OFS}{\sOFS} \longhookrightarrow \overslice{\DblCat}{\DCLR{\sOFS}} \,. \]
    We need to check that this equivalence restricts to an equivalence on the subcategories $\Ortho(\sOFS)$ and $\CocartRight{\DCLR(\sOFS)}$. 
    
    Let $F \colon \sOFSS \to \sOFS$ be a functor of factorization systems. Unwinding definitions, we see that $F_{\mathrm{eg}}$ is naturally equivalent to $\DCLR(F)(-,0)$ and $F_{\mathrm{in}}$ is naturally equivalent to $\DCLR(F)(0,-)$. 
    It hence follows from \Cref{equivalent-condition-ing-cart} that $F$ is an ingressive cartesian fibration if and only if $\DCLR(F)(0,-)$ is a right fibration. But \Cref{source-of-cartright} implies that this automatically forces $\DCLR(F)(n,-)$ to be a right fibration for all $n \ge 0$.
    We conclude that $\DCLR(F)$ is a \cocartright{} if and only $F$ is a curved orthofibration and that a functor between two curved othofibrations preserves cocartesian lifts of egressive morphisms if and only if the associated functor of \cocartright s preserves cocartesian lifts. 
    All in all, we have shown that $\DCLR$ induces a fully faithful functor $\Ortho(\sOFS) \hookrightarrow \CocartRight{\DCLR(\sOFS)}$. 
    But this functor is also essentially surjective because the source of any \cartright{} over a factorization double category is a factorization double category by \Cref{source-of-cartright}.    
\end{proof}

We will now state the (un)straightening equivalence for \cocartright s. We will denote the $2$-category of categories, functors and natural transformations by $\CatInftyOplax$, for example defined in \cite[Def. 5.1.6]{hhln23b}.

\begin{theorem} \label{maintheorem}
    There is a natural equivalence \[ \core{\CocartRight{-}} \cong \map_{\DblCat}\left(\verop{(-)},\Sq\left(\CatInftyOplax\right)\right)  \] of functors from double categories to spaces. 
\end{theorem}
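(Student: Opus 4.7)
Both sides of the equivalence are functors $\DblCat^{\op} \to \An$. The plan is to construct a natural comparison map from left to right, observe that both sides carry colimits in $\DblCat$ to limits in $\An$, and thereby reduce to verifying the equivalence on the representable cells $\LinOrd{m} \boxproduct \LinOrd{n}$, where it will follow from iterated classical straightening.

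First I would construct the natural transformation. Given a \cocartright{} $F \colon \doublecatS \to \doublecat$, one extracts a double functor $\verop{\doublecat} \to \Sq(\CatInftyOplax)$ as follows. To each object $c \in \doublecat(0,0)$ assign the fiber category $\doublecatS_c$. Each horizontal arrow $f \colon c \to c'$ of $\doublecat$ induces a pushforward functor $f_! \colon \doublecatS_c \to \doublecatS_{c'}$ coming from the cocartesian fibration $F(-,0)$, while each vertical arrow of $\verop{\doublecat}$ corresponds to a vertical arrow $g \colon c' \to c$ of $\doublecat$ and induces a pullback $g^{\ast} \colon \doublecatS_c \to \doublecatS_{c'}$ coming from the right fibration $F(0,-)$. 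Each square of $\doublecat$ produces, via the universal property of cocartesian lifts, a natural transformation between the two composite functors of fibers. By the defining universal property of $\Sq$ through the Gray tensor product (\Cref{def:square}), this data is exactly a double functor $\verop{\doublecat} \to \Sq(\CatInftyOplax)$, and the construction is manifestly natural in $\doublecat$.

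Next, I would check that both sides send colimits in $\DblCat$ to limits in $\An$. For the RHS this is immediate from the universal property of $\map_{\DblCat}$ (using that $\verop{(-)}$ is an autoequivalence of $\DblCat$). For the LHS, one uses that equivalences in $\core{\CocartRight{\doublecat}}$ are detected on fibers and that the defining fibration conditions are stable under pullback, so that the assignment $\doublecat \mapsto \core{\CocartRight{\doublecat}}$ inverts equivalences and turns colimits of double categories into limits of spaces. Since every double category is canonically a colimit of its cells, it suffices to check the comparison is an equivalence when $\doublecat = \LinOrd{m} \boxproduct \LinOrd{n}$.

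The main obstacle will be this cellular case. For $\doublecat = \LinOrd{m} \boxproduct \LinOrd{n}$, the RHS equals $\Fun_{\Catn{2}}(\LinOrd{n}^{\op} \Gray \LinOrd{m}, \CatInftyOplax)$ by definition of $\Sq$, and unpacking this amounts to giving a functor $\LinOrd{m} \to \Fun(\LinOrd{n}^{\op}, \CatInfty)$ together with oplax-natural coherence data prescribed by the Gray tensor product of simplices. The LHS requires iteratively straightening the cocartesian fibration over $\LinOrd{m}$ and the compatible family of right fibrations over $\LinOrd{n}$, producing the same underlying functor $\LinOrd{m} \to \Fun(\LinOrd{n}^{\op}, \CatInfty)$ together with comparison transformations reflecting the non-commutativity between cocartesian pushforwards and right-fibration pullbacks. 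The heart of the proof is to match these comparison transformations with the oplax natural transformations coming from the Gray tensor product, checking compatibility with the simplicial structure in both $m$ and $n$. This combinatorial matching, which uses the explicit description of $\LinOrd{n}^{\op} \Gray \LinOrd{m}$ as a strict $(2,2)$-category, is the technically delicate step.
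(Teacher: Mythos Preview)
Your overall strategy---reduce to the representable cells $\LinOrd{m} \boxproduct \LinOrd{n}$---matches the paper's, but there are two genuine gaps.

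First, your justification that $\core{\CocartRight{-}}$ sends colimits in $\DblCat$ to limits in $\An$ is insufficient. Stability under pullback only tells you that pullback defines a functor; it does not give descent. Concretely, if $\doublecat \simeq \colim_i \doublecat_i$ and you are handed a compatible family of \cocartright s $\doublecatS_i \to \doublecat_i$, you must show that the glued object $\doublecatS \to \doublecat$ is again a \cocartright{} of double categories. The subtle point is that $\doublecatS$ is a priori only a bisimplicial space, and you must verify the Segal and completeness conditions for it. The paper handles this in \Cref{rightKan} and \Cref{helplemmadetectingfibrations}: it embeds everything in the ambient topos $\PSh(\Delta \times \Delta)$, where the slice functor does preserve limits, and then checks separately that the local conditions (being a double category, being a \cocartright) can be detected after pullback to each cell. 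Your sentence about equivalences being detected on fibers and fibration conditions being pullback-stable does not address this.

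Second, the paper does \emph{not} construct an explicit comparison map. Instead it shows that both functors are right Kan extended from their restriction to $\Delta^{\op} \times \Delta^{\op}$, so any equivalence on that restriction extends uniquely. This sidesteps the coherence problem you gloss over with ``manifestly natural in $\doublecat$'': turning your informal assignment of fibers, pushforwards, pullbacks, and comparison $2$-cells into an honest natural transformation of functors $\DblCat^{\op} \to \An$ is essentially the straightening result you are trying to prove.

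For the cellular case the paper again takes a shorter route: rather than matching oplax data against iterated straightening by hand, it invokes the already-established equivalence $\map(\LinOrd{n}^{\op} \Gray \LinOrd{m}, \CatInftyOplax) \simeq \Ortho(\LinOrd{m} \prodOFS \LinOrd{n})$ from \cite{hhln23a,hhln23b}, and then uses \Cref{comparefibrations} together with \Cref{product-OFS} to identify this with $\core{\CocartRight{\LinOrd{m} \boxproduct \LinOrd{n}}}$. Your direct combinatorial matching is plausible in principle but, as you yourself note, is the delicate step you have not actually carried out.
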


\begin{remark}
    Since $\CatInftyOplax$ is a large category, the mapping space actually has to be taken in the very large category of large categories. We omit this distinction from the notation.
\end{remark}

The proof strategy follows the one from \cite[Thm. 1.26]{af20}. We are grateful to Jaco Ruit for pointing us to this reference.

\begin{proposition} \label{rightKan}
    The functor \[ \core{\CocartRight{-}} \colon \DblCat^{\op} \longrightarrow \An \] is right Kan extended from its restriction along $\Delta^{\op} \times \Delta^{\op} \to \DblCat^{\op}$.
\end{proposition}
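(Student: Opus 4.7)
Being right Kan extended along $\Delta^{\op} \times \Delta^{\op} \to \DblCat^{\op}$ amounts to showing that for every double category $\doublecat$, the canonical comparison
\[
\core{\CocartRight{\doublecat}} \longrightarrow \lim_{([m] \boxtimes [n] \to \doublecat)} \core{\CocartRight{[m] \boxtimes [n]}}
\]
is an equivalence. Since the inclusion $\DblCat \hookrightarrow \PSh(\Delta \times \Delta)$ is a reflective localization, the bicosimplicial objects $[m] \boxtimes [n]$ are dense in $\DblCat$, so every $\doublecat$ is the colimit of its canonical bicosimplicial slice diagram computed in $\DblCat$. Thus the task reduces to showing that $\core{\CocartRight{-}}$ converts this canonical colimit into a limit, and I would aim for the stronger statement that it turns every small colimit in $\DblCat$ into a limit in $\An$.

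To establish this, I would unfold what the data of a \cocartright{} $F \colon \doublecatS \to \doublecat$ amounts to bisimplicially. The total bisimplicial space $\doublecatS$ is a complete Segal space in each direction, a conjunction of pullback constraints on its bisimplicial values; and $F$ is a map of bisimplicial spaces over $\doublecat$. By \Cref{rightfibrations}, the horizontal right fibration condition on $F(n,-)$ for all $n$ is equivalent to the square $\doublecatS(n,1) \to \doublecat(n,1) \times_{\doublecat(n,0)} \doublecatS(n,0)$ being an equivalence for each $n$, a limit condition on representable evaluations of $\doublecat$. The vertical cocartesian fibration condition on $F(-,0)$ can likewise be packaged as a limit condition; for instance one characterizes $F(-,0)$-cocartesian edges as those for which the mapping-space comparison $\map_{\doublecatS(-,0)}(y, z) \to \map_{\doublecat(-,0)}(F y, F z) \times_{\map_{\doublecat(-,0)}(F x, F z)} \map_{\doublecatS(-,0)}(x, z)$ is an equivalence, which is again a limit of representable conditions on $\doublecat(-,0)$.

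Under each such reformulation, $\core{\CocartRight{\doublecat}}$ is expressed as a small limit of representable functors $\doublecat \mapsto \doublecat(m, n) = \map_{\DblCat}([m] \boxtimes [n], \doublecat)$. Each such representable carries colimits of double categories to limits of spaces, and a small limit of colimit-preserving functors valued in $\An$ is again colimit-preserving. Therefore $\core{\CocartRight{-}}$ sends colimits to limits, giving the desired right Kan extension property.

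The principal obstacle is to rewrite the vertical cocartesian fibration condition as a small limit of representable conditions on $\doublecat$; the horizontal right fibration condition is handled directly by \Cref{rightfibrations}, whereas cocartesian fibrations are less transparently local and require a characterization via the fibered mapping-space criterion (or equivalently, via the pullback description of $F$-cocartesian lifts). Once this local reformulation is in place, the limit-preservation argument is essentially formal.
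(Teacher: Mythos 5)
There is a genuine gap, and it sits at the heart of your argument. You propose to write $\core{\CocartRight{\doublecat}}$ as a small limit of representable functors $\doublecat \mapsto \doublecat(m,n) = \map_{\DblCat}(\LinOrd{m} \boxproduct \LinOrd{n},\doublecat)$ and then conclude limit-preservation formally. But the value of $\core{\CocartRight{\doublecat}}$ is a space of \emph{pairs} $(\doublecatS, F\colon \doublecatS \to \doublecat)$: the total bisimplicial space $\doublecatS$ is extra data that is not recorded by any evaluation of $\doublecat$. All the conditions you list (completeness/Segal conditions on $\doublecatS$, the right-fibration squares from \Cref{rightfibrations}, the mapping-space criterion for cocartesian lifts) are conditions \emph{on the pair} $(\doublecatS,F)$, so they do not exhibit the functor as a limit of representables in $\doublecat$. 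A quick sanity check shows the claim cannot hold: at $\doublecat = \LinOrd{0}\boxproduct\LinOrd{0}$ any small limit of representables is a small (essentially trivial) space, whereas $\core{\CocartRight{\LinOrd{0}\boxproduct\LinOrd{0}}}$ is a large, highly nontrivial space. Relatedly, the "stronger statement" you aim for (that $\core{\CocartRight{-}}$ turns \emph{all} small colimits of double categories into limits) is not something your method can deliver, and it is not needed.

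What is actually required, and what the paper supplies, are two inputs your sketch does not provide. First, one needs descent in the presheaf topos: the functor $\doublecat \mapsto \core{\left(\overslice{\PSh(\Delta\times\Delta)}{\doublecat}\right)}$, which records the total space together with its map, takes the canonical colimit $\doublecat \cong \colim_{\LinOrd{m}\boxproduct\LinOrd{n} \to \doublecat} \LinOrd{m}\boxproduct\LinOrd{n}$ (computed in $\PSh(\Delta\times\Delta)$) to a limit, by \cite[Thm. 6.1.3.9, Prop. 6.1.3.10]{lur09}. This is where the real content lies and it cannot be replaced by a limit-of-representables argument. Second, one needs to know that membership in the subspace $\core{\CocartRight{\doublecat}} \subset \core{\left(\overslice{\PSh(\Delta\times\Delta)}{\doublecat}\right)}$ (an inclusion of path components) can be detected after pulling back along all maps $\LinOrd{m}\boxproduct\LinOrd{n} \to \doublecat$; this is the paper's \Cref{helplemmadetectingfibrations}, whose proof itself requires an argument (checking locality of the double-category conditions fiberwise, and invoking \cite[Prop. 2.23(1)(c)]{af20} for the cocartesian condition). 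Your remark that the fibration conditions admit a "local reformulation" gestures toward this second point, but as stated it conflates locality of a \emph{condition} on a given map with an expression of the \emph{moduli space} of such maps as a limit of representables, and so the proposed proof does not go through without the descent step.
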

\begin{proof}
    Let $\doublecat$ be a double category. Using the pointwise formula for right Kan extension, we must show that the upper map in the following commutative diagram is an equivalence
    \[ \begin{tikzcd}
        \core{\CocartRight{\doublecat}} \ar[r] \ar[d] & \lim \left( \left( \overslice{\Delta \times \Delta}{\doublecat} \right)^{\op} \to \Delta^{\op} \times \Delta^{\op} \to \DblCat \to \An \right) \ar[d] \\
        \core{\left(\overslice{\PSh(\Delta \times \Delta)}{\doublecat}\right)} \ar[r] & \lim \left( \left( \overslice{\Delta \times \Delta}{\doublecat} \right)^{\op} \to \Delta^{\op} \times \Delta^{\op} \to \PSh(\Delta \times \Delta) \to \An \right)
    \end{tikzcd} \]
    where the last functor in the limit in the upper right corner is $\core{\CocartRight{-}}$ and on the lower right corner it is $\core{\overslice{\DblCat}{-}}$. 
    The left arrow is an inclusion of path components by definition and the right arrow is an inclusion of path components because it is a limit of such.
    The button arrow is an equivalence because $\PSh(\Delta \times \Delta)$ is a topos and the slice functor $\PSh(\Delta \times \Delta)^{\op} \longrightarrow \CatInfty$ hence does preserve limits by \cite[Thm. 6.1.3.9, Prop. 6.1.3.10]{lur09}.
    The claim hence follows from \Cref{helplemmadetectingfibrations}.
\end{proof}

\begin{lemma} \label{helplemmadetectingfibrations}
    Let $\doublecat$ be a double category and $F \colon \doublecatS \to \doublecat$ be a morphism of bisimplicial spaces such that the pullback along any map $\LinOrd{m} \boxtimes \LinOrd{n} \to \doublecat$ is a \cocartright \, of double categories. Then $F$ is a \cocartright \, of double categories. 
\end{lemma}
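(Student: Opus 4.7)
The plan is to verify the three conditions defining a \cocartright\,, namely (a) $\doublecatS$ is a double category, (b) $F(-, 0)$ is a cocartesian fibration, and (c) $F(n, -)$ is a right fibration for every $n \ge 0$. Each condition is local in a fiberwise sense over $\doublecat$, and since every $y \in \doublecat(n, m)$ classifies a map $y \colon \LinOrd{n} \boxtimes \LinOrd{m} \to \doublecat$ whose pullback $y^*\doublecatS$ is by hypothesis a \cocartright\,, each check will reduce to a corresponding statement about $y^*\doublecatS$.

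For (a), the Segal and completeness conditions for $\doublecatS(n, -)$ and $\doublecatS(-, n)$ all take the form \enquote{a certain map of spaces is an equivalence,} and each such map sits over an already-equivalent map for $\doublecat$. It thus suffices to check the map is an equivalence fiberwise over $\doublecat(n, m)$. Given $y \in \doublecat(n, m)$, the pullback $y^*\doublecatS$ is a double category, so its own Segal/completeness map over $(\LinOrd{n} \boxtimes \LinOrd{m})(n, m)$ is a global equivalence; its fiber over the identity point $(\id_n, \id_m)$ recovers precisely the fiber of the corresponding map for $\doublecatS$ over $y$.

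For (b), I construct cocartesian lifts using pullbacks along $\LinOrd{1} \boxtimes \LinOrd{0}$, and verify their universal property by invoking pullbacks along $\LinOrd{2} \boxtimes \LinOrd{0}$. Given an edge $e$ in $\doublecat(-, 0)$ and a lift $\tilde b_0$ of its source, the pullback $(e^*\doublecatS)(-, 0) \to \LinOrd{1}$ is a cocartesian fibration by hypothesis, yielding a lift $\tilde e$. Given compatible further data $f \colon b_1 \to F(z)$ and $g \colon \tilde b_0 \to z$, I assemble this into a $2$-simplex $\sigma \colon \LinOrd{2} \boxtimes \LinOrd{0} \to \doublecat$ whose pullback is again a cocartesian fibration, and extract the unique filler from the cocartesian universal property there. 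For (c), by \Cref{rightfibrations} (using that (a) has supplied the Segal condition) it suffices to check that the required pullback square is cartesian, which I verify fiberwise over $\doublecat(n, 1)$: for each $y$, the pullback $y^*\doublecatS$ is a \cocartright\,, so the analogous square for $y^*\doublecatS$ is cartesian, and its $(\id_n, \id_1)$-fiber yields what is needed.

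The main obstacle is the bookkeeping in (a): carefully identifying the fiber of the Segal map for $\doublecatS$ over $y \in \doublecat(n, m)$ with the $(\id_n, \id_m)$-fiber of the Segal map for $y^*\doublecatS$. This uses the explicit formula $(y^*\doublecatS)(p, q) = \doublecatS(p, q) \times_{\doublecat(p, q)} (\LinOrd{n} \boxtimes \LinOrd{m})(p, q)$ together with the fact that the representable $\LinOrd{n} \boxtimes \LinOrd{m}$ is itself a double category, so its own Segal map is a coherent equivalence over which the fiber comparison is straightforward. Once this identification is in place, steps (b) and (c) are essentially formal consequences of the definitions of cocartesian and right fibrations combined with the standard stability of these notions under pullback along simplices.
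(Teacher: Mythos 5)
Your proposal is correct, and its first half coincides with the paper's argument: the paper also verifies that $\doublecatS$ is a double category by observing that the Segal and completeness conditions are locality conditions against maps whose target is a representable $\LinOrd{m}\boxtimes\LinOrd{n}$, and that such a locality condition can be checked on fibers over points of $\map(\LinOrd{m}\boxtimes\LinOrd{n},\doublecat)$, where it becomes the corresponding condition for the pullback $y^*\doublecatS$ -- exactly your fiber identification in (a). Where you genuinely diverge is in (b) and (c): the paper does not construct cocartesian lifts by hand but cites \cite[Prop.\ 2.23(1)(c)]{af20}, which says that a functor is a cocartesian fibration as soon as its pullback along every map from $\LinOrd{2}$ is one, and handles the right-fibration condition by the analogous pullback along $\LinOrd{n}\boxtimes\LinOrd{2}$; you instead reprove the relevant special case directly, producing the lift from the $\LinOrd{1}\boxtimes\LinOrd{0}$-pullback and testing its universal property in $\LinOrd{2}\boxtimes\LinOrd{0}$-pullbacks, and for (c) you use \Cref{rightfibrations} plus a fiberwise check over $\doublecat(n,1)$ via $\LinOrd{n}\boxtimes\LinOrd{1}$. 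Your route is more self-contained (it avoids the external detection criterion), at the cost of one step you should make explicit in (b): the edge $\tilde e$ is constructed as a cocartesian edge of the $\LinOrd{1}$-pullback, i.e.\ it is only \emph{locally} cocartesian for the $\LinOrd{2}$-pullback $\sigma^*\doublecatS(-,0)\to\LinOrd{2}$, so before ``extracting the unique filler'' you must invoke the standard fact that in a cocartesian fibration every locally cocartesian edge is cocartesian (locally cocartesian lifts are essentially unique, hence $\tilde e$ agrees up to equivalence with a genuinely cocartesian lift); with that remark added, and ranging over all $f\in\map_{\doublecat(-,0)}(b_1,Fz)$ to verify the mapping-space square fiberwise, your argument is complete. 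Also note that for the completeness condition the fiberwise check runs over $\doublecat(n,0)$ (the representable target of the corepresenting map) rather than a general $\doublecat(n,m)$, but this is only a matter of bookkeeping.
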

\begin{proof}
    Let us first check that $\doublecatS$ is a double category. 
    The large category of double categories is the category of local objects in $\PSh(\Delta \times \Delta)$ with respect to certain morphisms $\mathbb{E} \to \mathbb{E}^\prime$ (corepresenting the Segal and completeness condition), i.e. we need to check that for those morphisms, the induced map \[ \map_{\PSh(\Delta \times \Delta)}(\mathbb{E}^\prime,\mathbb{D}) \longrightarrow \map_{\PSh(\Delta \times \Delta)}(\mathbb{E},\mathbb{D})  \] is an equivalence. 
    This can equivalently be checked on fibers over the same morphism for $\doublecat$ (where we know that it is an equivalence), i.e. that 
    \begin{equation} \label{equ-fiberwise}
    \ast \times_{\map(\mathbb{E}^\prime,\doublecat)} \map(\mathbb{E}^\prime,\doublecatS)\longrightarrow \ast \times_{\map(\mathbb{E},\doublecat)} \map(\mathbb{E},\doublecatS) \end{equation} 
    is an equivalence for every point in $\map(\mathbb{E^\prime},\mathbb{C})$. 
    Now we use that 
    \begin{align*} 
        \ast \times_{\map(\mathbb{E}^\prime,\doublecat)} \map(\mathbb{E}^\prime,\doublecatS) &\cong \ast \times_{\map(\mathbb{E}^\prime,\mathbb{E}^\prime)} \map(\mathbb{E}^\prime,\doublecatS \times_{\doublecat} \mathbb{E}^\prime) 
        \intertext{and}
        \ast \times_{\map(\mathbb{E},\doublecat)} \map(\mathbb{E},\doublecatS) &\cong \ast \times_{\map(\mathbb{E},\mathbb{E}^\prime)} \map(\mathbb{E},\doublecatS \times_{\doublecat} \mathbb{E}^\prime)
    \end{align*} where the fiber on the right side is taken over the identity and the morphism $\mathbb{E} \to \mathbb{E}^\prime$, respectively. This shows that \eqref{equ-fiberwise} is an equivalence for $\doublecatS \to \doublecat$ if it holds when replacing the functor with the pullback along an arbitrary map $\mathbb{E}^\prime \to \doublecat$.

    In our concrete situation, we have that $\mathbb{E}^\prime= \LinOrd{m} \boxtimes \LinOrd{n}$ for any morphism appearing in the definition of a double category. But we did assume that the total space is a double category when being pulled back along a map from $\LinOrd{m} \boxtimes \LinOrd{n}$.

    Now we check that $F$ is a \cocartright . Let $\LinOrd{2} \to \doublecat(-,0)$ be an arbitrary functor. This functor is also represented by a functor  $\LinOrd{2} \boxtimes \LinOrd{0} \to\doublecat$. The pullback of $F$ along this map is a \cocartright. Evaluating at $(-,0)$, we see that the pullback of $F(-,0)$ along the auxiliary map $\LinOrd{2} \to \doublecat(-,0)$ is a cocartesian fibration. It follows from \cite[Prop. 2.23(1)(c)]{af20} that $F(-,0)$ is a cocartesian fibration. Similarly, one shows that $F(n,-)$ is a right fibration by pulling back along maps out of $\LinOrd{n} \boxtimes \LinOrd{2}$.
\end{proof}

\begin{proof}[Proof of \Cref{maintheorem}]

    Both functors are right Kan extended from their restriction along $\Delta^{\op} \times \Delta^{\op} \to \DblCat^{\op}$ by \Cref{rightKan} and because representable functors of reflective subcategories of presheaves are right Kan extended.

    Therefore, it is enough to prove that the functors are equivalent when restricted along $\Delta^{\op} \times \Delta^{\op} \to \DblCat^{\op}$. There will be a unique equivalence extending the equivalence on this restriction. 

    By definition, we have natural equivalences
    \[ \map_{\DblCat} \left(\verop{(\LinOrd{m} \boxproduct \LinOrd{n})},\Sq\left(\CatInftyOplax\right)\right) \cong \map_{\Catn{2}} \left(\LinOrd{n}^{\op} \Gray \LinOrd{m} , \CatInftyOplax \right)\,. \]
    Combining \cite[Prop. 5.2.10]{hhln23b}, \cite[Cor. 6.5]{hhln23a} and \cite[Thm. 2.5.1]{hhln23b}, there are also natural equivalences
    \[ \map \left( \LinOrd{n}^{\op} \Gray \LinOrd{m} ,\CatInftyOplax \right) \cong \Ortho(\LinOrd{m} \prodOFS \LinOrd{n}) \,. \] The claim hence follows from \Cref{comparefibrations} and \Cref{product-OFS}.
\end{proof}
    A functor $F \colon \doublecatS \to \doublecat$ of double categories is called a \leftcart{} if $\fullop{F}$ is a \cocartright. We denote the large category of \leftcart s over $\doublecat$ by $\LeftCart{\doublecat}$. 

    The following result, combined with \Cref{maintheorem}, establishes an equivalence of spaces of \cocartright s and \leftcart s. In \cite[Thm. 3.1]{nui21}, it is shown that this enhances to an equivalence of categories.
\begin{corollary} \label{fullop-maintheorem}
    There is a natural equivalence \[ \core{\LeftCart{-}} \cong \map_{\DblCat}\left(\verop{(-)},\Sq\left(\CatInftyOplax\right)\right)  \] of functors from factorization systems to spaces. 
\end{corollary}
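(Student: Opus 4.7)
The plan is to reduce \Cref{fullop-maintheorem} to \Cref{maintheorem} by combining the definition of \leftcart{} with a self-duality of $\Sq(\CatInftyOplax)$.

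First, since $F$ is a \leftcart{} by definition exactly when $\fullop{F}$ is a \cocartright{}, the involutive auto-equivalence $\fullop{(-)}$ of $\DblCat$ induces a natural equivalence of spaces
\[
  \core{\LeftCart{\doublecat}} \simeq \core{\CocartRight{\fullop{\doublecat}}} .
\]
Feeding $\fullop{\doublecat}$ into \Cref{maintheorem} converts the right hand side into $\map_{\DblCat}(\verop{(\fullop{\doublecat})}, \Sq(\CatInftyOplax))$, naturally in $\doublecat$.

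Next, I would verify the identity $\verop{(\fullop{\doublecat})} = \swap{(\verop{\doublecat})}$ by directly unwinding the formula $\fullop{\doublecat}(a,b) = \doublecat(b^{\op}, a^{\op})$ on both sides, so that both send $(a,b)$ to $\doublecat(b,a^{\op})$. Since $\swap{(-)}$ is itself an involutive auto-equivalence of $\DblCat$, it can be moved across the mapping space, reducing the corollary to producing a natural equivalence of double categories
\[
  \swap{\Sq(\CatInftyOplax)} \simeq \Sq(\CatInftyOplax) .
\]

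The main obstacle is establishing this last self-duality. The idea is to first prove the general formula $\swap{\Sq(\mathbb{C})} \simeq \Sq(\mathbb{C}^{\mathrm{co}})$ for every $2$-category $\mathbb{C}$, where $\mathbb{C}^{\mathrm{co}}$ denotes $\mathbb{C}$ with its $2$-cells reversed. The geometric input is that swapping the factors of the oplax Gray tensor product $[n] \Gray [m]$ appearing in \Cref{def:square} reverses the direction of its $2$-cells, so that mapping out of $[m] \Gray [n]$ into $\mathbb{C}$ agrees with mapping out of $[n] \Gray [m]$ into $\mathbb{C}^{\mathrm{co}}$. Specializing to $\mathbb{C} = \CatInftyOplax$, I would then invoke the $2$-functor $(-)^{\op}$ sending a category $\cat$ to $\cat^{\op}$, a functor $F$ to $F^{\op}$, and a natural transformation $\alpha \colon F \Rightarrow G$ to $\alpha^{\op} \colon G^{\op} \Rightarrow F^{\op}$. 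This is a $2$-equivalence from $\CatInftyOplax$ to $\CatInftyOplax^{\mathrm{co}}$, and applying $\Sq$ together with the general formula yields the desired self-duality, closing the chain of natural equivalences.
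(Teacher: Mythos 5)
Your proposal is correct and follows essentially the same route as the paper: reduce to \Cref{maintheorem} via $\core{\LeftCart{\doublecat}} \cong \core{\CocartRight{\fullop{\doublecat}}}$, transfer $\swapop{(-)}$ across the mapping space, and establish $\swapop{\Sq(\CatInftyOplax)} \cong \Sq(\CatInftyOplax)$ from $\LinOrd{n} \Gray \LinOrd{m} \cong (\LinOrd{m} \Gray \LinOrd{n})^{\mathrm{co}}$ together with the equivalence $\CatInftyOplax \cong (\CatInftyOplax)^{\mathrm{co}}$ given by taking opposite categories. No gaps.
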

\begin{proof}
    By \Cref{maintheorem}, we have natural equivalences 
    \begin{align*}
        \core{\LeftCart{\doublecat}} \cong{} & \core{\CocartRight{ \fullop{\doublecat} }} \\
        \cong{} & \map_{\DblCat}\left(\verop{\left(\fullop{\doublecat}\right)},\Sq\left(\CatInftyOplax\right)\right) \\
        \cong{} & \map_{\DblCat}\left(\verop{\doublecat},\swapop{\left( \Sq\left(\CatInftyOplax\right)\right)} \right) 
    \end{align*}
    because $\swapop{\left(\verop{\left(\fullop{\doublecat}\right)}\right)}=\verop{\doublecat}$. 
    
    Taking opposite categories induces an equivalence \[ \CatInftyOplax \cong \left(\CatInftyOplax \right)^{\mathrm{co}} \] where $\mathrm{co}$ denotes the operation of taking opposites of $2$-morphisms (see e.g. \cite[Rem. 3.1.10]{hhln23b}).

    Unwinding definitions and using that $\LinOrd{n} \Gray \LinOrd{m} \cong \left( \LinOrd{m} \Gray \LinOrd{n} \right)^{\mathrm{co}}$ (see e.g. \cite[Obs. 2.2.10]{agh24}), we obtain an induced equivalence \[ \Sq\left( \CatInftyOplax \right) \cong \swapop{ \Sq\left( \CatInftyOplax \right) } \,. \]
    Combining the two above facts, the claim follows. 
\end{proof}

\section{Adequate triples and span categories} \label{section-adequate}
In this section, we will recall the definition of an \emph{adequate factorization system} and classify when a factorization double category comes from an adequate factorization system (\Cref{main-comparison-Ad}). 
We will use this to prove that the category of adequate factorization systems has a unique non-trivial automorphism, the span category (\Cref{Aut(AOFS)}, \Cref{span=verop}).
\begin{definition}[{\cite[Def. 4.2]{hhln23a}}]
    Let $\sOFS$ be a factorization system. We call a square 
    \[ \begin{tikzcd}
        \bullet \ar[d,rightarrowtail] \ar[r,twoheadrightarrow] & \bullet \ar[d,rightarrowtail]  \\
        \bullet  \ar[r,twoheadrightarrow]  & \bullet
        \end{tikzcd} \]
    \emph{ambigressive} if the horizontal morphisms are egressive and the vertical morphisms are ingressive. Similarly, we call a a cospan 
    \[ \begin{tikzcd}
         & \bullet \ar[d,rightarrowtail]  \\
        \bullet  \ar[r,twoheadrightarrow]  & \bullet
    \end{tikzcd}\]
    \emph{ambigressive} if one leg is egressive and the other one is ingressive. 

    The factorization system $\sOFS$ is called \emph{adequate} if every ambigressive square is a pullback and if every ambigressive cospan admits a pullback. 

    We write $\OFSAd \subset \OFS$ for the full subcategory of adequate factorization systems.
\end{definition}

\begin{lemma} \label{lem-eq-adequate}
    A factorization system is adequate if and only if every ambigressive cospan can uniquely be extended to an ambigressive square. 
\end{lemma}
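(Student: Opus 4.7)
The plan is to handle each implication separately.

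For the forward direction, assume $\sOFS$ is adequate. Given an ambigressive cospan $\alpha \colon A \twoheadrightarrow C$, $\beta \colon B \rightarrowtail C$, adequacy furnishes a pullback $P$, and the leg $p_A \colon P \to A$ is automatically ingressive by stability of $\catR$ under pullback in any factorization system. To see that the other leg $p_B \colon P \to B$ is egressive, factor it as $P \twoheadrightarrow Q \rightarrowtail B$ via the factorization system; the outer rectangle with top $P \twoheadrightarrow Q$, right $Q \rightarrowtail B \rightarrowtail C$, bottom $A \twoheadrightarrow C$, left $P \rightarrowtail A$ is then ambigressive, hence a pullback of $(\alpha, \beta m)$ by the adequacy hypothesis. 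Comparing with the original pullback $P$ via pullback pasting along the ingressive $m \colon Q \rightarrowtail B$ forces this ingressive factor to be an equivalence, so $p_B$ is egressive. Uniqueness of the ambigressive extension is now immediate: any two such extensions are pullbacks of the same cospan by adequacy, so canonically equivalent.

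For the reverse direction, assume every ambigressive cospan has a unique ambigressive extension, and write $P$ for this extension of a given ambigressive cospan $(\alpha \colon A \twoheadrightarrow C, \beta \colon B \rightarrowtail C)$. To show that $P$ is the pullback, take a cone $(X, f, g)$ with $\alpha f = \beta g$, and factor $f$ as $X \twoheadrightarrow Y \rightarrowtail A$ and $g$ as $X \twoheadrightarrow Z \rightarrowtail B$. The essential uniqueness of factorizations, applied to the two induced factorizations of the common composite $\alpha f = \beta g \colon X \to C$, produces a canonical egressive map $Y \twoheadrightarrow Z$ exhibiting $Y$ as an ambigressive extension of the cospan $(\alpha, \beta \circ (Z \rightarrowtail B))$. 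Naturality of the ambigressive extension in the cospan, which itself follows from the uniqueness hypothesis, then gives a canonical map $Y \to P$ covering the inclusion $Z \rightarrowtail B$, and pre-composing with $X \twoheadrightarrow Y$ defines the required map $X \to P$; uniqueness is obtained by factoring any candidate map similarly and invoking uniqueness of factorizations and of ambigressive extensions. For the remaining clause of adequacy, any ambigressive square is the unique ambigressive extension of its bottom-right cospan, hence equals the pullback just constructed.

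The main technical obstacle will be the reverse direction: turning a mere ``unique ambigressive extension'' into the full pullback universal property requires carefully combining essential uniqueness of factorizations with naturality of ambigressive extensions in the cospan, in order to reduce an arbitrary cone over the cospan to the ambigressive situation covered by the hypothesis.
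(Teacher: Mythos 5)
Your forward direction breaks at its final inference. Pasting the square with top $e$, left $\id_P$, right $m$, bottom $p_B$ onto the original pullback and cancelling only yields that this small square is cartesian, i.e.\ that the base change of $m$ along $p_B$ is an equivalence (equivalently, that $e$ is the base change of $p_B$ along $m$); this does not make $m$ itself an equivalence---any ingressive $m$ through which $p_B$ happens to factor produces such a cartesian square. Moreover, no argument can repair this step from adequacy as literally stated in the paper: take the poset $p<a<c$, $p<b<c$ with $a,b$ incomparable, let the egressive morphisms be the identities together with $a\to c$ and let the ingressive morphisms be all remaining ones. This is a factorization system (every morphism has a unique egressive--ingressive factorization), every ambigressive square is cartesian and every ambigressive cospan admits a pullback; yet the pullback $p$ of the ambigressive cospan $a\twoheadrightarrow c\leftarrowtail b$ has $p\to b$ not egressive, so this cospan admits no ambigressive extension at all. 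So the claim you are trying to establish (that the projection $P\to B$ is automatically egressive) is simply not available; the ``only if'' direction has to be read with the convention of the cited definition of adequate triples, where the pullback of an ambigressive cospan is required to be again an ambigressive square. With that reading, existence of the extension is part of the hypothesis and only uniqueness needs an argument (cartesianness of ambigressive squares, as in your last sentence of that paragraph), which is exactly the paper's one-line proof of this direction.

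Your reverse direction is in the right spirit and close to the paper's strategy (factor the legs of a test cone and play uniqueness of ambigressive extensions against pasting), but the sentence ``naturality of the ambigressive extension in the cospan, which itself follows from the uniqueness hypothesis'' is where all the work lies, and it is not a formal consequence of uniqueness. To produce the map $Y\to P$ over $Z\rightarrowtail B$ one must, for instance, extend the auxiliary ambigressive cospan $P\twoheadrightarrow B\leftarrowtail Z$ to an ambigressive square, paste it onto the square with corner $P$, and identify the pasted square with $Y$ using uniqueness of extensions of $(\alpha,\beta m_Z)$; this paste-and-compare move is precisely the content of the paper's proof, which first uses orthogonality of the egressive part of $X\to A$ against the ingressive $P\rightarrowtail A$ to reduce to the case where that leg is ingressive, and then compares the test object itself, viewed as an ambigressive extension, with the pasted square. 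Two further points your sketch glosses over: the egressive map $Y\to Z$ only comes from uniqueness of factorizations after you further factor $\alpha m_Y$ (since $\alpha m_Y$ is not ingressive, $(e_Y,\alpha m_Y)$ is not itself a factorization); and in the $\infty$-categorical setting you must show that the whole space of fillers $X\to P$ is contractible, not merely exhibit one map and assert uniqueness---the reduction via orthogonality and the contractibility of the space of ambigressive extensions are what give this in the paper's argument.
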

\begin{proof}
    The \enquote{only if}-part follows from the uniqueness of pullbacks. 
    
    Now assume that every ambigressive cospan can uniquely be extended to an ambigressive square. We must show that this square is a pullback. Equivalently, we must check that the space of fillers of the following diagram is contractible
    \[ 
    \begin{tikzcd}[sep=huge]
        t \ar[dr,dashed] \ar[d,twoheadrightarrow,swap,"f"] \ar[drr,bend left] &  \\
        s \ar[dr,rightarrowtail] & d \ar[d,rightarrowtail] \ar[r,twoheadrightarrow] & b \ar[d,rightarrowtail]  \\
        & c  \ar[r,twoheadrightarrow]  & a
    \end{tikzcd} \,,
    \] (here we already factored an arbitray morphism from $t$ to $c$ into an egressive morphism $f$ followed by an ingressive morphism). We can apply the lifting criterion for factorization systems to the left square, to see that the dashed arrow would admit a unique factorization through $f$. We might therefore reduce to the the situation where $f$ is an equivalence, i.e. 
    \[
    \begin{tikzcd}[sep=huge]
        t \ar[dr,dashed] \ar[r,twoheadrightarrow,"f^\prime"] \ar[ddr,rightarrowtail,bend right] & s^\prime \ar[dr,rightarrowtail,"g"] \\
         & d \ar[d,rightarrowtail] \ar[r,twoheadrightarrow,"h"] & b \ar[d,rightarrowtail,"j"]  \\
        & c  \ar[r,twoheadrightarrow,"k"]  & a
    \end{tikzcd} \,,
    \] (where we now factored the morphism from $t$ to $b$).
    The span $(d \xrightarrow{h} b \xleftarrow{g} s^\prime)$ can be extended uniquely to an ambigressive square. This in particular gives a morphism from $t$ to $d$. The composition of the lower square with the upper square gives an extension of the ambigressive cospan $(k \xrightarrow{k} a \xleftarrow{j \circ g} s^\prime)$. But the outer square already provided such an extension, which we assumed to be unique. This provides the necessary homotopies, making the diagram commute.
\end{proof}

\begin{definition} 
   A factorization double category $\doublecat$ is called \emph{adequate} if $\horop{\doublecat}$ is a factorization double category. We denote the full subcategory of adequate factorization double categories by $\DblCatAd \subset \DblCatAd$.
\end{definition}

\begin{proposition} \label{main-comparison-Ad}
    A factorization system $\sOFS$ is adequate if and only if $\DCLR(\sOFS)$ is adequate, i.e. the equivalence from \Cref{main-theorem} restricts to an equivalence \[  \OFSAd \cong \DblCatAd \,. \]
\end{proposition}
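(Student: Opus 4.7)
The plan is to unfold the definition of an adequate factorization double category and match it with the characterization of adequate factorization systems given in \Cref{lem-eq-adequate}. Once the objectwise characterization is established, the restriction to an equivalence of categories is automatic from \Cref{main-theorem} because both $\OFSAd \subset \OFS$ and $\DblCatAd \subset \DblCatOF$ are full subcategories.

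First, I would unwind what it means for $\horop{\DCLR(\sOFS)}$ to be a factorization double category. Since the horizontal opposite swaps $d_0$ and $d_1$ in the first coordinate, condition $(1)$ of \Cref{equ-def-DblOF} applied to $\horop{\DCLR(\sOFS)}$ asks that the square
\[ \begin{tikzcd}[sep=huge]
    \DCLR(\sOFS)(1,1) \ar[r,"{\DCLR(\sOFS)(\id,d_0)}"] \ar[d,swap,"{\DCLR(\sOFS)(d_0,\id)}"] & \DCLR(\sOFS)(1,0) \ar[d,"{\DCLR(\sOFS)(d_0,\id)}"] \\
    \DCLR(\sOFS)(0,1) \ar[r,"{\DCLR(\sOFS)(\id,d_0)}"] & \DCLR(\sOFS)(0,0)
\end{tikzcd} \]
is cartesian. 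Under the identification of $\DCLR(\sOFS)(1,1)$ with the space of ambigressive squares, of $\DCLR(\sOFS)(1,0)$ with egressive morphisms, and of $\DCLR(\sOFS)(0,1)$ with ingressive morphisms, the two maps out of $\DCLR(\sOFS)(1,1)$ read off the \emph{bottom horizontal} (which is egressive) and the \emph{right vertical} (which is ingressive) of an ambigressive square; their common image in $\DCLR(\sOFS)(0,0)$ is the bottom-right corner. The cartesian condition therefore translates to: the space of ambigressive squares maps equivalently to the space of ambigressive cospans, i.e., every ambigressive cospan extends uniquely to an ambigressive square.

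By \Cref{lem-eq-adequate}, this is precisely the condition that $\sOFS$ is an adequate factorization system. Conversely, if $\sOFS$ is adequate, then the same unwinding shows that $\horop{\DCLR(\sOFS)}$ satisfies the cartesian square condition and is therefore a factorization double category, i.e., $\DCLR(\sOFS) \in \DblCatAd$. The main (and only) subtle step here is the careful bookkeeping of which face maps become which under $\horop{(-)}$ and confirming that the resulting projection picks out the lower-right L of the square; once this is checked, everything else is definitional.

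Combining this objectwise equivalence with \Cref{main-theorem} gives the desired restricted equivalence $\OFSAd \cong \DblCatAd$, since an equivalence of categories restricts to an equivalence of full subcategories whenever the essential images correspond.
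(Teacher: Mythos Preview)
Your proof is correct and follows exactly the same approach as the paper's own proof, which simply says: ``Unwinding definitions, we find that $\DCLR(\sOFS)$ is adequate if and only if $\sOFS$ fulfills the condition from \Cref{lem-eq-adequate}, which is equivalent to being adequate.'' You have spelled out that unwinding in detail --- tracking how $\horop{(-)}$ swaps $d_0$ and $d_1$ in the first coordinate and identifying the resulting cartesian square condition with the unique-extension property for ambigressive cospans --- which is precisely what the paper leaves implicit.
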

\begin{proof}
    Unwinding definitions, we find that $\DCLR(\sOFS)$ is adequate if and only if $\sOFS$ fulfills the condition from \Cref{lem-eq-adequate}, which is equivalent to being adequate.
\end{proof} 

We will now verify that $\horop{(-)}$ recovers the span construction from \cite[Def. 2.12, Prop. 4.9]{hhln23a} under the equivalence from \Cref{main-comparison-Ad}. 

We will actually compute the entire automorphism group of $\DblCatAd$, hence also generalizing \cite[Thm. 5.21]{hhln23a}, saying that the span category construction refines to a $\mathbb Z/2\mathbb{Z}$-action on $\OFSAd \cong \DblCatAd$.

\begin{theorem} \label{Aut(AOFS)}
    There is an equivalence of groups
    \[
        \Aut\left(\DblCatAd\right) \cong \mathbb{Z}/2\mathbb{Z}
    \]
    with generator $\horop{(-)}$ on the left side.
\end{theorem}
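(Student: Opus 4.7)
For the easy direction, I would first verify that $\horop{(-)}$ defines a non-trivial involution of $\DblCatAd$, giving an inclusion $\mathbb{Z}/2\mathbb{Z} \hookrightarrow \Aut(\DblCatAd)$. By \Cref{main-comparison-Ad} the functor $\horop{(-)}$ restricts to an endofunctor of $\DblCatAd$, and since $(-)^{\op}$ is an involution on $\Delta$, it is an involution on $\PSh(\Delta \times \Delta)$ and hence on $\DblCatAd$. To see this involution is non-trivial, I would note that it swaps the two distinct face maps $d_0, d_1 \colon \LinOrd{0} \boxproduct \LinOrd{0} \rightrightarrows \LinOrd{1} \boxproduct \LinOrd{0}$; since $\LinOrd{1} \boxproduct \LinOrd{0}$ admits no non-trivial automorphisms in $\DblCatAd$, no natural equivalence $\id \simeq \horop{(-)}$ can exist.

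For the reverse inclusion, the strategy is to restrict an arbitrary automorphism $F$ of $\DblCatAd$ to the generating subcategory on the representables. The objects $\LinOrd{m} \boxproduct \LinOrd{n}$ lie in $\DblCatAd$ by \Cref{lem-defDCLR} (applied to $\LinOrd{m} \prodOFS \LinOrd{n}$), and every factorization double category is a canonical colimit of such, since this holds in $\PSh(\Delta \times \Delta)$ and the reflection $\PSh(\Delta \times \Delta) \to \DblCatAd$ is the identity on adequate factorization double categories. Since equivalences preserve colimits, $F$ is determined by its restriction to the full subcategory on these generators, which by Yoneda is $\Delta \times \Delta$.

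The hard part is showing that $F$ actually preserves the subcategory $\{\LinOrd{m} \boxproduct \LinOrd{n}\}$ up to equivalence, so that the restriction is itself an auto-equivalence of $\Delta \times \Delta$ rather than taking values in a larger subcategory. I would handle this by characterizing the generators intrinsically: $\LinOrd{0} \boxproduct \LinOrd{0}$ is the terminal object; the arrow objects $\LinOrd{1} \boxproduct \LinOrd{0}$ and $\LinOrd{0} \boxproduct \LinOrd{1}$ are the \enquote{minimal} non-terminal objects, e.g.\ those corepresenting the spaces of horizontal, respectively vertical, arrows; and the higher $\LinOrd{m} \boxproduct \LinOrd{n}$ are built as iterated pushouts along spine-type inclusions in the sense of \Cref{saturation-lemma}. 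All of these characterizations are preserved by any equivalence.

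Finally, having reduced to $\Aut(\Delta \times \Delta) \cong (\mathbb{Z}/2\mathbb{Z})^2 \rtimes \mathbb{Z}/2\mathbb{Z}$ (generated by the opposite on each factor and the factor-swap), I would determine which elements of this group actually extend to an automorphism of $\DblCatAd$. Only the identity and $\horop{(-)}$ do so: the swap $\swapop{(-)}$ would require reversing the order of factorization to still yield a factorization system (cf.\ the detection criterion of \Cref{detectingOFS}), while the vertical opposite $\verop{(-)}$ would force all ingressive morphisms to be invertible. Together with the injectivity of $\Aut(\DblCatAd) \hookrightarrow \Aut(\Delta \times \Delta)$ from the previous paragraph, this pins down $\Aut(\DblCatAd) \cong \mathbb{Z}/2\mathbb{Z}$.
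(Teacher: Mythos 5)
Your skeleton is the same as the paper's (restrict an auto-equivalence $F$ to the representables $\LinOrd{m}\boxproduct\LinOrd{n}$, use that $\DblCatAd$ is generated under colimits by them, then identify the possible restrictions), but the step you yourself label as the hard part is not actually carried out, and the shortcut you propose does not work. Characterizing $\LinOrd{1}\boxproduct\LinOrd{0}$ and $\LinOrd{0}\boxproduct\LinOrd{1}$ as \enquote{the objects corepresenting horizontal, resp.\ vertical, arrows} is circular: whether an abstract equivalence $F$ preserves the functors of horizontal and vertical arrows is precisely what has to be proved. And \enquote{minimal non-terminal object} has no evident invariant meaning: there are connected two-object adequate factorization double categories other than the two arrow objects (for instance $\DCLR$ of the factorization system on the category with two objects, two parallel non-invertible morphisms, all of them egressive and only equivalences ingressive), so counting objects, or maps from the terminal object $\LinOrd{0}\boxproduct\LinOrd{0}$, does not single out the generators. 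This is exactly where the paper has to work: it only establishes that $F(\LinOrd{1}\boxproduct\LinOrd{0})$ has two objects and receives a map from one of the two arrow objects which is an equivalence on objects, and then plays this off against the same statement for $F^{-1}$ to upgrade that map to an equivalence; no intrinsic characterization is ever produced, and your proposal is missing a replacement for this argument.

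The last step also has gaps. You quote $\Aut(\Delta\times\Delta)\cong(\mathbb{Z}/2\mathbb{Z})^{2}\rtimes\mathbb{Z}/2\mathbb{Z}$ without proof, and you exclude $\swapop{(-)}$ and $\verop{(-)}$ by slogans rather than arguments: to make them precise you would have to exhibit an adequate factorization double category whose swap, resp.\ vertical opposite, fails the defining corner condition (true, but not done), and the reason you give for $\verop{(-)}$ is not obviously the right one --- its failure amounts to the failure of unique completion of \enquote{ambigressive spans}, not to ingressive morphisms being invertible, and \Cref{detectingOFS} by itself does not rule out the swap. The paper sidesteps both issues simultaneously: having identified $F$ on the two arrow objects, it applies $F$ to the four possible gluings of $\LinOrd{0}\boxproduct\LinOrd{1}$ and $\LinOrd{1}\boxproduct\LinOrd{0}$ along $\LinOrd{0}\boxproduct\LinOrd{0}$ and counts objects of the resulting pushouts in $\DblCatAd$ (exactly two of the four corners reflect to $\LinOrd{1}\boxproduct\LinOrd{1}$, the other two keep three objects), which pins the restriction of $F$ to $\Delta_{\le 1}\times\Delta_{\le 1}$ down to the identity or $\horop{(-)}$ without ever computing $\Aut(\Delta\times\Delta)$ or producing counterexamples. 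Finally, since the theorem asserts an equivalence of groups, you also need the automorphism space to be discrete; the paper gets this from the fact that the identity functor of $\Delta\times\Delta$ has no non-trivial automorphisms, a point your proposal does not address.
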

\begin{proof}
    Note that the image of the Yoneda embedding $\Delta \times \Delta \to \PSh(\Delta \times \Delta)$ is contained in $\OFSAd$. We first prove that any equivalence restricts to an equivalence of this subcategory.

    Any such equivalence $F$ must preserve the terminal object $\LinOrd{0} \boxproduct \LinOrd{0}$. From fully-faithfulness it follows that $F$ sends $\LinOrd{0} \boxproduct \LinOrd{1}$ and $\LinOrd{1} \boxproduct \LinOrd{0}$ to double categories with two objects. 

    There must be a vertical or horizontal morphism between these two objects because we could otherwise write the double category as a disjoint union of two non-initial double categories, which is not the case for $\LinOrd{0} \boxproduct \LinOrd{1}$ and $\LinOrd{1} \boxproduct \LinOrd{0}$. 
    
    We hence obtain a morphism 
    \begin{equation} \label{morphism-equivalence}
         \doublecat \longrightarrow F(\LinOrd{1} \boxproduct \LinOrd{0})
    \end{equation} which is an equivalence on objects where $\doublecat$ is either $\LinOrd{1} \boxproduct \LinOrd{0}$ or $\LinOrd{0} \boxproduct \LinOrd{1}$. Applying the same argument to $F^{-1}$ we obtain a morphism 
    \[
        \doublecatS \longrightarrow F^{-1}(\doublecat)\]
    which is an equivalence on objects where $\doublecatS$ is either $\LinOrd{1} \boxproduct \LinOrd{0}$ or $\LinOrd{0} \boxproduct \LinOrd{1}$.
    Applying $F$ to this morphism yields a morphism 
    \begin{equation}  \label{morphism-equivalence-2}
        F(\doublecatS) \longrightarrow \doublecat
    \end{equation}
    which is still an equivalence on objects because $F$ preserves $\LinOrd{0} \boxproduct \LinOrd{0}$ and is fully faithful. 
    Composing \eqref{morphism-equivalence-2} with \eqref{morphism-equivalence}, yields an element in $\map(F(\doublecatS),F(\LinOrd{1} \boxproduct \LinOrd{0})) \cong \map(\doublecatS,\LinOrd{1} \boxproduct \LinOrd{0})$ which is an equivalence on objects. 
    From this we learn that $\doublecatS \cong \LinOrd{1} \boxproduct \LinOrd{0}$ and that this composition actually is an equivalence of double categories. 

    Composing \eqref{morphism-equivalence-2} with \eqref{morphism-equivalence} the other way around now yields an endomorphism of $\doublecat$ which is an equivalence on objects, and therefore also an equivalence.
    We conclude that \eqref{morphism-equivalence} is an equivalence. 
    
    A similar argument shows that $F(\LinOrd{0} \boxtimes \LinOrd{1})$ is either $\LinOrd{0} \boxtimes \LinOrd{1}$ or $\LinOrd{1} \boxtimes \LinOrd{0}$ (and different from $F(\LinOrd{1} \boxtimes \LinOrd{0})$).
    
    A computation shows that any pushout of $\LinOrd{0} \boxproduct \LinOrd{1}$ and $\LinOrd{1} \boxtimes \LinOrd{0}$ along $\LinOrd{0} \boxtimes \LinOrd{0}$ must either be $\LinOrd{1} \boxtimes \LinOrd{1}$ (in case where this is literally the condition of being an adequate factorization system, i.e. the pushout in \eqref{OFSwithRLP} or a similar one obtained by pre-composing with $\horop{(-)}$) or it only contains $3$ objects (because in this case, it turns out that the pushout can be computed in $\PSh(\Delta \times \Delta)$). 
    Because $F$ preserves pushouts as well as the space of objects, we conclude that $F$ preserves $\LinOrd{1} \boxtimes \LinOrd{1}$ and that $F$ must either restrict to the identity or to $\horop{(-)}$ on $\Delta_{\le 1} \times \Delta_{\le 1} \subset \Delta \times \Delta \subset \OFSAd$. 
    
    By the Segal condition, every object in $\Delta \times \Delta$ is a colimit of objects in $\Delta_{\le 1} \times \Delta_{\le 1}$, the value $F(\LinOrd{m} \boxtimes \LinOrd{n})$ hence is determined by the restriction of $F$ to $\Delta_{\le 1} \times \Delta_{\le 1}$. A calculation shows that if $F$ restricts to $\horop{(-)}$ on $\Delta_{\le 1} \times \Delta_{\le 1}$, it will still send $\LinOrd{m} \boxtimes \LinOrd{n}$ to $\LinOrd{m} \boxtimes \LinOrd{n}$ (and also if $F$ restricts to the identity, obviously). 

    We moreover claim that any automorphism of $\Delta \times \Delta$ is already determined by its restriction to $\Delta_{\le 1} \times \Delta_{\le 1}$. Indeed, the $k+1$ inclusions $\LinOrd{0} \to \LinOrd{k}$ and the $m+2$ maps $\LinOrd{m} \to \LinOrd{1}$ induce an injective map \[ \map_{\Delta}(\LinOrd{k},\LinOrd{m}) \longhookrightarrow \left( \map_{\Delta}(\LinOrd{0},\LinOrd{1} \right)^{\times (k+1)(m+2)} \,. \] 

    The category $\DblCatAd$ is a reflective subcategory of $\PSh(\Delta \times \Delta)$, every colimit-preserving functor is hence left Kan extended along the inclusion $\Delta \times \Delta \to \DblCatAd$. Therefore, we have inclusions \[ \Aut(\DblCatAd) \subset \Fun^L(\DblCatAd,\DblCatAd) \subset \Fun(\Delta \times \Delta,\DblCatAd) \,. \]
    It follows from the above discussion that we actually have that \[\Aut(\OFSAd) \subset \Aut(\Delta \times \Delta,\Delta \times \Delta)\] is the subspace spanned by the two components $\horop{(-)}$ and the identity. 
    
    Finally, note that the identity functor on $\Delta \times \Delta$ has no non-trivial automorphisms because none of the objects in $\Delta \times \Delta$ has. 
\end{proof}
\begin{corollary} \label{span=verop}
    Let $\sOFS$ be an adequate factorization system. Then there is a natural equivalence \[ \horop{\DCLR(\sOFS)} \cong \DCLR(\Span(\sOFS)) \] where $\Span \colon \OFSAd \to \OFSAd$ denotes Barwick's span category functor, \cite[Def. 5.7]{bar17}.
\end{corollary}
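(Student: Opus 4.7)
The plan is to derive this as a direct consequence of the automorphism computation in \Cref{Aut(AOFS)} combined with the equivalence $\DCLR \colon \OFSAd \simeq \DblCatAd$ from \Cref{main-comparison-Ad}.

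First, I would transport the automorphism $\horop{(-)}$ of $\DblCatAd$ across the equivalence $\DCLR$ to obtain an automorphism
\[ \Phi = \DCLR^{-1} \circ \horop{(-)} \circ \DCLR \colon \OFSAd \longrightarrow \OFSAd \,. \]
Producing the desired natural equivalence $\horop{\DCLR(\sOFS)} \cong \DCLR(\Span(\sOFS))$ is then equivalent to producing a natural equivalence $\Phi \simeq \Span$ of endofunctors of $\OFSAd$.

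Next, I would invoke \Cref{Aut(AOFS)}: since $\DCLR$ is an equivalence of categories, it induces an isomorphism $\Aut(\OFSAd) \cong \Aut(\DblCatAd) \cong \mathbb{Z}/2\mathbb{Z}$. It therefore suffices to show that both $\Phi$ and $\Span$ are nontrivial automorphisms, as any two nontrivial elements of this group are equivalent. The automorphism $\Phi$ is nontrivial because $\horop{(-)}$ is nontrivial on $\DblCatAd$ (it exchanges $\LinOrd{1} \boxproduct \LinOrd{0}$ and $\LinOrd{0} \boxproduct \LinOrd{1}$, for example), and $\DCLR$ is an equivalence. The span functor $\Span$ is an automorphism by \cite[Thm.~5.21]{hhln23a}, which states that the span construction refines to a $\mathbb{Z}/2\mathbb{Z}$-action; in particular, $\Span$ has order two and is therefore nontrivial.

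The main conceptual point of this argument, and the part that required the preceding theory, is the classification of $\Aut(\DblCatAd)$ in \Cref{Aut(AOFS)}; once that is available, the remainder is essentially bookkeeping. The only thing that needs a small sanity check is that $\Span$ really is an automorphism (and not merely an endofunctor), but this is contained in \cite{bar17} and is explicitly invoked in \cite[Thm.~5.21]{hhln23a}. Naturality in $\sOFS$ is automatic because the equivalence is extracted from the group $\Aut(\OFSAd)$, which tautologically gives an equivalence of functors.
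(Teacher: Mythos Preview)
Your approach mirrors the paper's proof: both conclude by showing that $\Span$ and the conjugate of $\horop{(-)}$ by $\DCLR$ are nontrivial automorphisms of $\OFSAd$, hence coincide by \Cref{Aut(AOFS)}. Two details need correction, however.

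First, $\horop{(-)}$ does \emph{not} exchange $\LinOrd{1}\boxproduct\LinOrd{0}$ and $\LinOrd{0}\boxproduct\LinOrd{1}$: it sends $\cat\boxproduct\catS$ to $\cat^{\op}\boxproduct\catS$, so $\horop{(\LinOrd{1}\boxproduct\LinOrd{0})}\simeq\LinOrd{1}\boxproduct\LinOrd{0}$. (The operation exchanging the two factors is $\swapop{(-)}$.) You do not need a separate argument here anyway, since \Cref{Aut(AOFS)} already asserts that $\horop{(-)}$ is the nontrivial generator.

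Second, the fact that $\Span$ refines to a $\mathbb{Z}/2\mathbb{Z}$-action only gives $\Span^2\simeq\id$, which does not by itself rule out $\Span\simeq\id$; ``order two'' is not the same as ``order dividing two''. The paper instead cites \cite[Thm.~4.12]{hhln23a} for the statement that $\Span$ is an automorphism not naturally equivalent to the identity.
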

\begin{proof}
    By \cite[Thm. 4.12]{hhln23a}, $\Span$ is an automorphism of $\OFSAd$ which is not naturally equivalent to the identity and so is $\corners \circ \horop{(-)} \circ \DCLR$ by \Cref{main-comparison-Ad}, so they agree by \Cref{Aut(AOFS)} (using the equivalence from \Cref{main-comparison-Ad} again).
\end{proof}
Combining the above result with \Cref{comparefibrations}, we recover the equivalence between op-Gray fibration over an orthogonal adequate triple and curved orthofibrations between its span category from \cite[Thm. 5.21]{hhln23a}:
\begin{theorem}[Haugseng, Hebestreit, Nuiten, Linskens]
    Let $\sOFS$ be an adequate factorization system. The functor $\Span \colon \OFSAd \to \OFSAd$ induces a natural equivalence 
    \[ \opGray(\sOFS) \cong \Ortho(\Span(\sOFS)) \]
\end{theorem}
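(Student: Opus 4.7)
The plan is to string together \Cref{comparefibrations} with \Cref{span=verop}, reducing the statement to a standard duality between cartesian and cocartesian fibrations under horizontal opposites.

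First, applying \Cref{comparefibrations} to both sides reformulates the theorem as an equivalence
\[ \CartRight{\DCLR(\sOFS)} \cong \CocartRight{\DCLR(\Span(\sOFS))} \,. \]
Using \Cref{span=verop}, we may rewrite the right-hand side as $\CocartRight{\horop{\DCLR(\sOFS)}}$. It therefore suffices to establish, for an arbitrary factorization double category $\doublecat$, a natural equivalence
\[ \CartRight{\doublecat} \cong \CocartRight{\horop{\doublecat}} \,, \]
and then to verify that the resulting composite is indeed the one induced by $\Span$ in the sense that it is natural in $\sOFS \in \OFSAd$.

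To establish this equivalence, recall that $\horop{(-)}$ precomposes with $(-)^{\op}$ in the first coordinate only. Hence $\horop{\doublecat}(n,-) = \doublecat(n,-)$, so the right-fibration conditions in the vertical direction coincide. On the other hand, $\horop{\doublecat}(-,0)$ is the opposite simplicial space of $\doublecat(-,0)$, which under \Cref{rezk-thm} corresponds to the opposite category. Since a functor $p$ between categories is a cartesian fibration precisely when $p^{\op}$ is a cocartesian fibration, and since the condition of preserving (co)cartesian lifts is likewise dual, the conditions defining a \cartright{} over $\doublecat$ match up exactly with those defining a \cocartright{} over $\horop{\doublecat}$. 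This yields the desired equivalence, and by construction the association $\doublecat \mapsto \horop{\doublecat}$ is functorial, so the equivalence is natural in $\doublecat$.

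Chaining the identifications together gives
\[ \opGray(\sOFS) \cong \CartRight{\DCLR(\sOFS)} \cong \CocartRight{\horop{\DCLR(\sOFS)}} \cong \CocartRight{\DCLR(\Span(\sOFS))} \cong \Ortho(\Span(\sOFS)) \,, \]
naturally in $\sOFS \in \OFSAd$. The principal content has already been carried by \Cref{comparefibrations} and \Cref{span=verop}; the only genuine step is the pointwise duality $\CartRight{\doublecat} \cong \CocartRight{\horop{\doublecat}}$, which I expect to be the sole mild technical point but is essentially tautological after unwinding the definitions of both sides.
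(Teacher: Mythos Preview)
Your proposal is correct and follows the same route as the paper, which simply cites \Cref{span=verop} and \Cref{comparefibrations}. The intermediate duality $\CartRight{\doublecat} \cong \CocartRight{\horop{\doublecat}}$ that you spell out is left implicit in the paper's one-line proof, and your verification of it is accurate.
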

\begin{proof}
    This follows from combining \Cref{span=verop} and \Cref{comparefibrations}.
\end{proof}
\sloppy
\printbibliography
\end{document}